\theoremstyle{definition} \newtheorem{theorem}{Theorem}[section]
\theoremstyle{definition} \newtheorem{definition}[theorem]{Definition}
\theoremstyle{definition} \newtheorem{lemma}[theorem]{Lemma}
\theoremstyle{definition} \newtheorem{proposition}[theorem]{Proposition}
\theoremstyle{definition} 
\theoremstyle{definition} \newtheorem{observation}[theorem]{Observation}
\theoremstyle{definition} 
\theoremstyle{definition} \newtheorem{remark}[theorem]{Remark}
\title{Central Forests in Trees} 
\date{}
\author{\begin{tabular}[t]{c@{\extracolsep{2em}}c}
     Shrisha Rao\footnote{Corresponding author.}, \ \ Babita Grover \\
    {\tt \{srao,babita\}@iiitb.ac.in} \\
    International Institute of Information Technology \\
    Bangalore 560 100 \\ India
\end{tabular}}
\begin{document}

\maketitle

\begin{abstract}

  A new $2$-parameter family of central structures in trees, called
  central forests, is introduced. Minieka's $m$-center
  problem~\cite{Edward1970} and McMorris's and Reid's
  central-$k$-tree~\cite{McMorrisReid1997} can be seen as special
  cases of central forests in trees.  A central forest is defined as a
  forest $F$ of $m$ subtrees of a tree $T$, where each subtree has $k$
  nodes, which minimizes the maximum distance between nodes not in $F$
  and those in $F$. An $O(n(m+k))$ algorithm to construct such a
  central forest in trees is presented, where $n$ is the number of
  nodes in the tree. The algorithm either returns with a central
  forest, or with the largest $k$ for which a central forest of $m$
  subtrees is possible. Some of the elementary properties of central
  forests are also studied.

\end{abstract}

\noindent{\bf Keywords:}\\

central forest, center, tree, $m$-center, central-$k$-tree, algorithm,
location problems, network center problems

\section{Introduction}

Graph theory has applications in many real life situations. One of the
very interesting applications of graph theory is the \emph{theory of
  facility location} in networks. Graph theorists often focus on
(unweighted) graphs, particularly trees. In the real world, there are
a lot of networks that are organized into trees, such as
ethernet-based campus/enterprise networks, core networks of cellular
networks and telephone networks. These applications are based on the
centrality in trees. There are many kinds of centrality notions like
center of tree, defined by Jordan~\cite{Jordan1869} as a set of nodes
that minimizes the maximum distance to other nodes, centroid of a
tree, again defined by Jordan~\cite{Jordan1869} as the set $x$ of
nodes of tree $T$ that minimizes the maximum order of a component of
$T-x$ and median of tree, defined by Zelinka~\cite{Zelinka1968} as the
set of nodes that minimizes the sum of distances or equivalently the
average distance to other nodes.

Minieka \cite{Edward1970} considered an $m$ center of a tree $T$ as a
set $M$ of $m$ nodes of the tree that minimizes the maximum distance
between every other node of $T$ and $M$. Chandrasekaran and Tamir
\cite{ChandrasekaranTamir1980} presented an algorithm for locating $m$
facilities on a tree network to minimize the maximum distance of the
nodes on the network and their nearest node in $m$-center. This
algorithm takes $O ( ( n\log m )^2 )$ time with $n$ nodes in the
network. Nimrod and Tamir \cite{NimrodTamir1983} presented an
improvement on the previous algorithms for locating $m$ facilities. An
$O(n\log ^3 n)$ algorithm was presented for continuous $m$-center
problem in trees. They also presented an $O(n\log ^2 n\log \log n)$
algorithm for a weighted discrete $m$-center problem.  Steven
et. al.~\cite{Steven1999} gave a self-stabilizing algorithm for
locating centers and medians of trees. Kariv and
Hakimi~\cite{KarivHakimi1979} presented an $O(n \cdot \lg n)$
algorithm for finding the (node or absolute) 1-center; an $O(n)$
algorithm for finding a (node or absolute) dominating set of radius
$r$ and an $O(n^2 \cdot \lg n)$ algorithm for finding a (node or
absolute) $m$-center for any $1 < m < n$ for node-weighted tree. Kariv
and Hakimi \cite{KarivHakimi1979} also proposed an $O(n \cdot \lg ^{m
  - 2} n)$ algorithm for finding an absolute $m$-center (where $3
\leqq m < n$) and an $O(n \cdot \lg ^{m - 1} n)$ algorithm for finding
a node $m$-center (where $2 \leqq m < n$) for a node unweighted tree.

Tamir~\cite{Tamir1988} studied the use of dynamic data structures on
obnoxious center location problems on trees, and on the classical
$m$-center problem on general networks, deriving better complexity
bounds in both the cases.  Again in 1991, Tamir~\cite{Tamir1991}
showed that the continuous $m$-Maximin and $m$-Maxisum dispersion
models are NP-hard for general (nonhomogeneous) graphs. Burkard
et. al.~\cite{Burkard2001} offered an improvement over Tamir's work by
giving a linear-time algorithm for graph that is a path or a star.
Their algorithm is an improvement over Tamir's~\cite{Tamir1988} by a
factor of log($n$) for general trees.

Slater~\cite{Slater1981,JSlater1981,Slater1982} considered the
location facility problem that was path shaped on a tree. Locating
paths with minimum eccentricity and distance, respectively, may be
viewed as multicenter and multimedian problems, respectively, where
the facilities are located on nodes that must constitute a path. A
linear algorithm for finding paths with minimum eccentricity is also
presented. Hedetniemi et. al. ~\cite{Hedetniemi1981}
gave a linear time algorithm for finding the minimal path among all
paths with minimum eccentricity in a tree network. Morgan and
Slater~\cite{Morgan1980} considered the problem of finding a path of
minimum total distance to all other nodes in a tree network with
equal as well as non-equal arc lengths. Minieka~\cite{Minieka1985}
considered central paths and trees with fixed length $L$ in a tree
network.  Slater~\cite{Slater1978,PJSlater1981} studied centers
to centroids and $k$-nucleus of a graph and also Reid~\cite{Reid1991}
considered centroids to centers in trees.

Along with these central sets, $k$-center and $k$-median as given in
Handler and Mirchandani~\cite{Handler1979} and Mirchandani and
Francis~\cite{Mirchandani1990} are of interest in facility location
theory. McMorris and Reid~\cite{McMorrisReid1997} considered a central
$k$-tree is a subtree $C$ of $k$ nodes that minimizes the maximum
distances between nodes not in $C$, and those in $C$.

In this paper, we generalize the work done by
Minieka~\cite{Edward1970} and McMorris and
Reid~\cite{McMorrisReid1997}, by considering a forest $F$ of $m$
subtrees of a tree $T$, where each subtree has $k$ nodes, where $F$
minimizes the maximum distance between nodes not in $F$ and those in
$F$. We introduce this generalization as central forests in
trees. Minieka's~\cite{Edward1970} and McMorris's and
Reid's~\cite{McMorrisReid1997} work can be seen as special cases of
central forest in trees. An $O(n(m+k))$ algorithm to construct such a
central forest in trees is presented, where $n$ is the number of nodes
in the tree. The complete performance analysis and proof of
correctness are given. We study some of the elementary properties of
central forest. We also have the upper bound on the order of subtrees
in central forests as our algorithm either returns with a central
forest, or with the largest $k$ for which a central forest of $m$
subtrees is possible.

Section~\ref{prelim} talks about the notations and the work done by
Minieka~\cite{Edward1970} and McMorris and
Reid~\cite{McMorrisReid1997} in detail.  Section~\ref{CF} defines
formally the central forests in trees. Some of the properties of
central forest and the upper bound on the order of subtrees in central
forests are also discussed in this section. Section~\ref{algos}
presents algorithm to construct central forest in trees along with
performance analysis and proof of correctness. This section also
includes an example demonstrating the working of
algorithm. Section~\ref{conc} concludes the paper and also provides
some pointers for future work.

\section{Preliminaries}   \label{prelim}
\subsection{Notation}       
\begin{definition}                  \label{termsDefi}

  The definitions of some of the terms used throughout the paper are
  as follows:

\begin{itemize}
 
\item[(a)] A \emph{graph} $G$ is an ordered pair \((V(G), E(G))\),
  where \(V(G)\) is a finite nonempty set of \emph{nodes}, and \(E(G)
  \subseteq \{\{v, v'\} \ | \ v, v' \in V(G), v' \neq v\}\) is the set
  of \emph{edges}.  \index{graph}
\item[(b)] The number of nodes in $G$ is called the \emph{order} or
  \emph{size} of $G$.  \index{order}
\item[(c)] Let \(e = \{v, v'\}\) be an edge; we say that $e$ is
  \emph{incident} with $v$ and $v'$, that $v'$ is a \emph{neighbor} of
  $v$, or that $v'$ is adjacent to $v$.  Two edges are \emph{adjacent}
  if they share a common node.  \index{neighbor} \index{adjacent}
\item[(d)] The \emph{degree} of a node $v$, denoted $degree(v)$, is
  the number of edges incident with $v$.  All nodes of degree 1 are
  called \emph{leaves} or \emph{end-nodes}, while other nodes are
  \emph{internal nodes}.  \index{degree of a node} \index{leaves}
  \index{end-nodes} \index{internal nodes}
\item[(e)] A \emph{path} $P$ from $v_1$ to $v_i$ in $G$ is an
  alternating sequence \(P \ \triangleq \ v_1, e_1, v_2, e_2,\)
  \(\ldots, e_{i-1}, v_i\) of nodes and edges such that \(v_1, v_2,
  \ldots, v_i\) are distinct, and for every $j$, \(1 \leq j < i\),
  $e_j$ is an edge incident with nodes $v_j$ and $v_{j+1}$; $i - 1$ is
  the {\it length} of $P$.  If there exists such a path in $G$ such
  that \(\{v_i, v_1\}\) is also an edge of $G$, then $P$ together with
  this edge is a \emph{cycle} of length $i$.  \index{path}
\item[(f)] A graph is \emph{connected} if there is a path between any
  two nodes.  \index{connected}
\item[(g)] The \emph{distance} between nodes \(v_i\) and \(v_j\),
  denoted \(d(v_i, v_j)\), is the length of a shortest path between
  them.  \index{distance} \index{d($v_i, v_j$)}
\item[(h)] The \emph{degree} of a graph $G$, denoted \(\Delta (G)\),
  is the maximum degree of any node in $G$.  \index{degree of a graph}
\item[(i)] If $G$ and $G'$ are graphs, $G'$ is a \emph{subgraph} of $G$ if \(V(G') \subseteq V(G)\) and \(E(G') \subseteq E(G)\).  
\index{subgraph}
\item[(j)] A \emph{connected component} of a graph $G$ is a maximal
  connected subgraph of $G$.  \index{connected component}
\item[(k)] A graph that is \emph{connected} and has no cycles is
  called a {\it tree}.  \index{tree}
\item[(l)] The \emph{eccentricity} of a node $x$ in a connected graph
  $G$, denoted \(e(x)\), is given by:
\[ e(x) \ \triangleq \ \mathrm{max}\{d(x, y) \ | \ y \in V(G)\}.\]
\index{eccentricity}
\item[(n)] If \(V \subset V(T)\), the \emph{subtree induced by} $V$ can be defined in two ways:
\begin{itemize}
\item[(i)] The subgraph of $T$ induced by $V$ is the forest
  with node set $V$ and edge set given by all edges of $T$ with both
  ends in $V$.
\item[(ii)] The subgraph of $T$ induced by $V$ is the smallest
  subtree of $T$ containing all nodes of $V$.
\end{itemize}
\index{subtree}
\item[(o)] A \emph{forest} in a graph is a disjoint union of trees. In
  trees, a forest is a disjoint union of subtrees.  \index{forest}
\end{itemize}

\end{definition}
\subsection{Center}
\index{center}
Center of a graph $G$, denoted by center($G$) is defined as follows: 
\begin{definition}
If $c \subseteq V(G)$ and $|c| = 1$ or $2$, then 
\[ center(G) \ \triangleq \ \{c \ | \ c \in V(G), e(c) \leq e(y), \ \forall y \in V(G)\}. \]
\end{definition}

In other words, the center of a graph is a set of nodes that minimizes
the maximum distance to other nodes of the graph. For trees,
Jordan~\cite{Jordan1869} has given a pruning algorithm to find the
center. The algorithm works in steps. In each step, the end (leaf)
nodes of the tree and their connecting edges are removed to get a new
tree. When we keep on pruning the tree like this, we are left with
either a single node or two nodes joined by an edge. This set of $1$
or $2$ nodes is the unique center of the tree. This set of nodes has
the minimum eccentricity. In the Figure \ref{centr}, the dashed edges
show the nodes pruned at the first step and the dotted edges denote
the nodes pruned at the second step of the algorithm. At the third
step only a single node is left, and that is the unique center of the
tree. For clarity, the central node of tree is indicated by an oval
around it.

\begin{figure}[htp]
\begin{center}
\includegraphics [scale=.35, angle = -90] {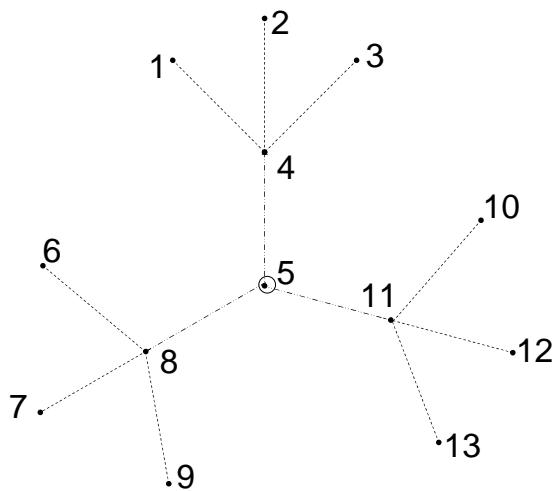}
\caption{An example of a center of a tree}
\label{centr}
\end{center}
\end{figure} 
    
\subsection{$m$-Center} 

Minieka~\cite{Edward1970} has generalized the concept of a central
node of a tree $T$ as follows:

\begin{definition}  \label{mDefi}
If \(X \subseteq V(T)\) and \(u \in V(T)\), define
\[ d(u, X) \ \triangleq \ \mathrm{min} \ d(u, x_i), \ x_i \in X \]
\[ e(X) \ \triangleq \ \mathrm{max} \ d(u, X), \ u \in V(T) \]
\[ center_m (T) \ \triangleq \ \{X \ | \ e(X) \leq e(Y), \ \forall \, Y \subseteq V(T), |Y| = m\} \] 
\end{definition}

Minieka~\cite{Edward1970} defines the $m$-center by ``an $m$-center
set of a graph is any set of $m$ nodes, belonging to either the nodes
or edges, that minimizes the maximum distance from a node to its
nearest node in $m$-center.'' In other words, the problem of
\index{$m$-center}$m$-center for $m$ = $1,2$..., is to find the set of
nodes $X$ that minimizes the maximum distance between a node of $G$
and its nearest node in $m$-center.

The members of set $X$ are called as $m$-center of tree $T$. The set
$X$ need not be unique. Minieka has also given a method for solving
the $m$-center problem by solving a finite series of minimum set
covering problems. An example of an $m$-center is as shown in Figure
\ref{mcenter} for $m=3$. In this figure nodes $4,8$ and $11$ makes up
the $3$-center of the tree, i.e. here $X$ = \{$4,8,11$\}.
\begin{figure}[htp]
\begin{center}
\includegraphics [scale=.35, angle = -90] {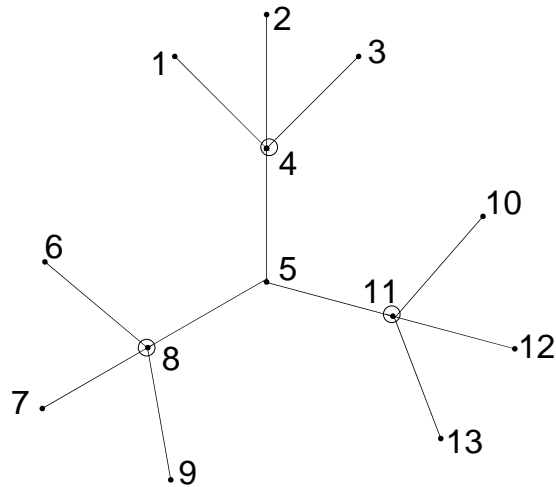}
\caption{An example of a $3$-center of a tree}
\label{mcenter}
\end{center}
\end{figure} 

NB: In the literature, an $m$-center of the tree (graph) is also
sometimes known as the $p$-center of the tree (graph).  But, as
Minieka originally proposed it as $m$-center, we are using the term
$m$-center instead of $p$-center.

\begin{remark}\label{pigeonhole}

We need to impose the further restriction that \(\Delta T \leq m\).

\end{remark}

The justification for this is that if the restriction is not observed,
then for some trees, e.g., a star of degree \(\Delta(T)\), the
$m$-center may not be properly defined.  \(\Delta T\) nodes in a star
can be properly placed as nodes of the $m$-center, but if \(m >
\Delta T\), then by the pigeonhole principle, at least one limb of the
star must have more than one node belonging to the $m$-center, which
is not sensible.

\subsection{Central $k$-Trees}       \label{centralktree}

Central $k$-trees were introduced by McMorris and Reid in
1997~\cite{McMorrisReid1997}.

\begin{definition} \label{cktDefi} 

  If the tree $T$ be of order $n$ and $W_{T,k}$ be the set of all the
  subtrees of $T$ of order $k$, then a central-$k$-tree is defined as
\[ \mathrm{Central}$--$k$--$\mathrm{tree}(T) \ \triangleq \ \{ W \ | \ e(W) \leq e(W'), \ \forall W' \in W_{T,k}, k\leq n\}. \]

\end{definition}

\begin{figure}[htp]
\begin{center}
\includegraphics [scale=.35, angle = -90] {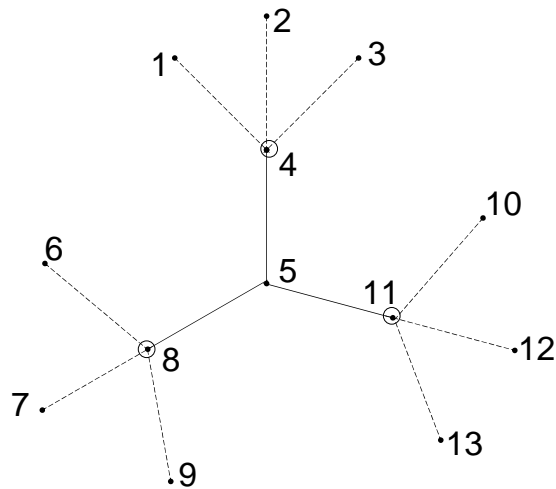}
\caption{An example of a central-$4$-tree of a tree}
\label{CKT}
\end{center}
\end{figure} 

An example of central-$k$-tree is shown in Figure \ref{CKT} for
$k=4$. The algorithm for central-$k$-tree for tree $T$, from their
paper is given as algorithm \textsc{Central-$k$-tree}. The tree $T$ of
Figure \ref{mcenter}, when pruned as per algorithm
\textsc{Central-$k$-tree}, yields a central-$4$-tree as depicted in
Figure \ref{CKT}. The pruned edges are shown as dotted lines.

The pruning process is derived from the procedure used to prove that
the tree center consists of a single node or two adjacent nodes. In
fact, if $T$ has a single node in its center and $k=1$, then this
pruning process gives a unique center. And if $T$ has two nodes in its
center and $k=2$, then also this process yields a unique center
consisting of two adjacent nodes.

The \textsc{Central-$k$-tree} algorithm prune off all end-nodes
repeatedly, as in the pruning process for determining the center,
until a subtree $T'$ is obtained, where \(|V(T')| \leq k\). If
\(|V(T')| = k\), then $T'$ is a central $k$-tree; if not, add vertices
and incident edges to $T'$ until \(|V(T')| = k\).  The result then is
a central-$k$-tree.

In Algorithm \ref{centktree} (\textsc{Central-$k$-tree}), $T(i)$ and
$L(i)$ are two sets of nodes, these sets contain un-pruned nodes and
pruned nodes at each step of algorithm, respectively. $Z$ is a set
that holds the subset of $L(i)$. The set $U$ gives the
central-$k$-tree at the end of the algorithm.

\incmargin{1em}
\restylealgo{boxruled}
\linesnumbered
\begin{algorithm}[H]               \index{Central-$k$-tree Algorithm}      
\label{centktree}
\dontprintsemicolon
$T(0) \leftarrow T$ \;
$L(0) \leftarrow$ \{$v \in V(T)|v$ is an end-node of $T$\} \;
$i \leftarrow 1$ \;
$T(i) \leftarrow T(i-1)/L(i-1)$ \;
\If{$|V(T(i))| \leq k$}{
$U \leftarrow V(T(i)) \cup Z$, where $Z$ is any $(k-|V(T(i))|)$-subset of $L(i-1)$ \;
Go to line 12 \;
}
$L(i) \leftarrow$ \{$v$ $\in V(T(i))|v$ an end-node of $T(i)$\} \;
$i \leftarrow i+1$ \;
Go to line 4\;
Stop \;
\caption{\textsc{Central-$k$-tree}($T$, $k$)}
\end{algorithm}
\decmargin{1em}

In Algorithm \ref{centktree}, $L(0)$ is set as the end nodes of
$T$. $T(i)=T(i-1)/L(i-1)$ means that the $T(i)$ is set to the nodes
those are in $T(i-1)$ but not in $L(i-1)$, the corresponding edges are
also not there. At line $5$, nodes in $T(i)$ are compared with $k$, if
$|V(T(i)| \leq k$, then a subset nodes from $L(i-1)$ is added to $U$
to make number of nodes in $U$ exactly $k$. And the algorithm
exits. But if the condition at line $5$ fails, $L(i)$ is calculated
again and $i$ is incremented and control goes back to line number $4$.

The working of algorithm in detail can be shown using example tree $T$
of Figure \ref{mcenter}. Let us suppose that we want to get
central-$3$-tree. At line $1$, $T(0)$ is set to
$\{1,2,3,4,5,6,7,8,9,11,12,13\}$. Line $2$ sets $L(0) =
\{1,2,3,6,7,9,10,12,$ $13\}$ because these are the end nodes
(leaves). At line $4$, $T(1) = \{4,5,8,11\}$ as shown in Figure
\ref{CKT}. The `if' condition at line $5$ fails, so at line $9$,
$L(1)$ is set to $\{4,8,11\}$. At line $10$, $i$ is incremented and
control goes back to line $4$.

Here, $T(2)$ is set to $\{5\}$. This time the `if' condition is
$\mathbf{TRUE}$, so a subset $Z$ of cardinality $2$ ($3-1$) is chosen
from $L(1) = \{4,8,11\}$. Let the chosen set $Z$ is $\{4,8\}$. $U$ is
set to $\{5\}\cup \{4,8 \}$, i.e $U = \{5,4,8\}$ and algorithms
stops. The set $U$ gives us the central-$3$-tree depicted in Figure
\ref{Central3tree} of tree shown in Figure \ref{mcenter}. The pruned
edges are shown as dotted lines.

\begin{figure}[htp]
\begin{center}
\includegraphics [scale=.35, angle = -90] {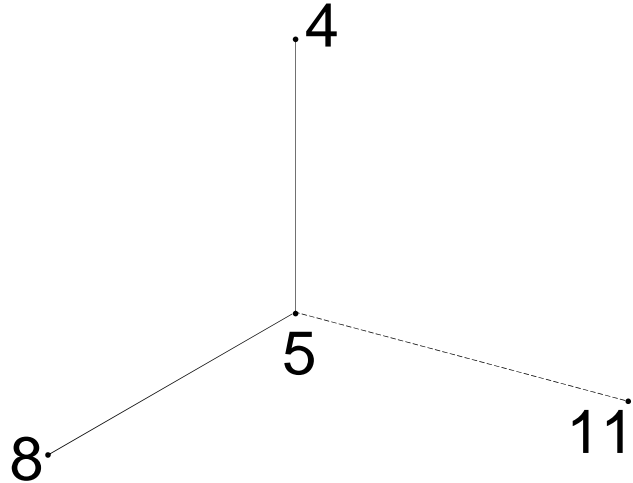}
\caption{An example of a central-$3$-tree of a tree}
\label{Central3tree}
\end{center}
\end{figure} 

If a different subset $Z$ is chosen at line $6$, we may get a
different central-$3$-tree. This shows that a central-$k$-tree is not
unique.

There might be central $k$-trees that do not arise from the algorithm
as per McMorris's and Reid's ~\cite{McMorrisReid1997} remark:

\begin{quote}
  There might be central $k$-trees that do not arise from the
  algorithm. For example, if $k=3$ and $T$ is the $7$-tree obtained by
  subdividing each edge of the complete bipartite graph $K(1,3)$, then
  each of three distinct subtrees of $T$ isomorphic to $K(1,2)$ are
  central 3-trees of $T$ arising from the algorithm. However, each of
  the three subtrees of order of order $3$ consisting of $2$-path
  starting from an end-node of $T$ is a central $3$-tree of $T$ as
  well. Line $6$ in the algorithm could be altered to allow $Z$ to be
  any $(k-|V(T(i)|)$-subset of $V(T)$ so that $T[U]$ is a $k$-tree,
  and all central $k$-trees would be produced by all such choices. The
  restriction of $Z$ to nodes in $L(i-1)$ in line $6$ insures that
  $T[U]$ is a subtree.
\end{quote}

McMorris and Reid also give a proposition which states that for any
integers $n$ and $k$, where $n \geq k \geq 1$, every $k$-tree is a
central $k$-tree for some $n$-tree.

\section{Central Forests}   \label{CF}

Let $k$ be the order of the subtrees of the tree $T$, and
$\mathbb{F}_{T,m}$ the set of all forests in $T$ of $m$ subtrees each.
\begin{definition}  \label{CFDefi}
The eccentricity of a single forest \(F \in \mathbb{F}_{T,m} \) is
  given by:
\[ e_m (F) \ \triangleq \ \mathrm{max} \ d(x, F), \forall x \in V(T). \] 
Then the central forests of $T$ can be given by:
\[ C(T; m, k) \ \triangleq \ \{ F \ | \ e_m(F) \leq e_m(F'), \ \forall F' \in \mathbb{F}_{T,m} \}. \]
\end{definition}

A single forest $F$ is a set of nodes, these nodes are divided into
$m$ subtrees each of order $k$. The $e_m(F)$ calculates the
eccentricity of forest $F$, i.e., the maximum distance between a node
in $T-F$ and its nearest node in $F$.

Here, $C(T;m,k)$ denotes the central forest of $m$ subtrees of tree
$T$ with each subtree being of order $k$. The central forest is
defined as the forest $F$ in $\mathbb{F}_{T,m}$ such that $F$ has
minimum of maximum distance between a node in $T-F$ and its nearest
node in $F$, among all the forests $F'$ in $\mathbb{F}_{T,m}$ i.e
$e_m(F) \leq e_m(F')$. Of course, $F$ need not be unique.

\begin{remark} \label{remark1}

  Note that for the definition of \(C(T; m, k)\) to be meaningful, we
  need to impose the restriction:
\[ mk \leq |V(T)|. \]

\end{remark}

The reason for this is that in a central forest, there are $m$
subtrees and order of each subtree is $k$. Hence, the total number of
nodes in central forest is $m\times k$. Obviously, the number of nodes
in central forest cannot exceed the number of nodes in $T$. So \( mk
\leq |V(T)|. \)

Figure~\ref{CMTK} gives an example of a tree with a central forest,
specifically, \(C(T; 2, 3)\). There are $2$ subtrees of order $3$ that
make the central forest, i.e. $m = 2$ and $k = 3$. The nodes $3,4,5$
and $12,13,14$ make up the two subtrees which minimizes the maximum
distance between a node in $\{1,2,6,7,8,9,10,11,15,16\}$ and its
nearest node in $\{3,4,5,12,13,14\}$. The subtrees are surrounded by
ovals for clarity.

\begin{figure}[htp]
\begin{center}
\includegraphics [scale=.25, angle = -90] {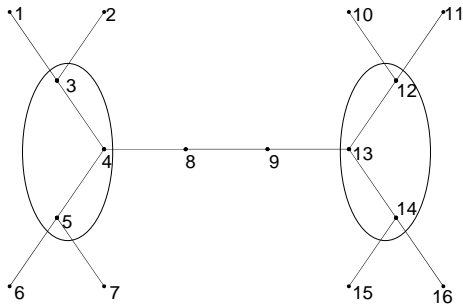}
\caption{An example of a central forest.}
\label{CMTK}
\end{center}
\end{figure} 

\begin{proposition} \label{spcasemcenters} 

  The $m$-center is a special case of a central forest $C(T;m,k)$ for
  tree $T$, i.e., a $C(T;m,1)$ is equivalent to an $m$-center of tree
  $T$.

\end{proposition}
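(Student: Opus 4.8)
The plan is to show that setting $k=1$ makes the definition of a central forest (Definition~\ref{CFDefi}) coincide, term for term, with the definition of an $m$-center (Definition~\ref{mDefi}), via the obvious identification of a forest of $m$ one-node subtrees with the $m$-element set of its nodes.

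First I would unwind what $\mathbb{F}_{T,m}$ is when each subtree has order $k=1$. Such a forest $F$ is a disjoint union of $m$ subtrees, each of which is a single node (Definition~\ref{termsDefi}(o)); disjointness forces these $m$ nodes to be distinct, so $F$ is completely described by a set $X_F \subseteq V(T)$ with $|X_F| = m$, and conversely every such $X$ is realized by exactly one $F \in \mathbb{F}_{T,m}$ (with $k=1$). Thus $F \mapsto X_F$ is a bijection from $\mathbb{F}_{T,m}$ onto $\{X \subseteq V(T) : |X| = m\}$. Note that under $k=1$ the restriction $mk \le |V(T)|$ of Remark~\ref{remark1} reads $m \le |V(T)|$, so this is exactly the regime in which an $m$-center is considered (together with the standing assumption $\Delta T \le m$ of Remark~\ref{pigeonhole}).

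Next I would match the two objective functions. For any $x \in V(T)$, the quantity $d(x,F)$ appearing in $e_m(F)$ is $\min_{f \in V(F)} d(x,f) = \min_{x_i \in X_F} d(x,x_i)$, which is exactly $d(x, X_F)$ in the notation of Definition~\ref{mDefi}; since both $e_m(F)$ and $e(X_F)$ are obtained by maximizing their respective quantities over all $x \in V(T)$, we get $e_m(F) = e(X_F)$. (If one instead maximizes only over $V(T)\setminus V(F)$, as the informal description in the paper does, nothing changes: $d(x,F)=0$ for $x \in V(F)$, so omitting those nodes never lowers the maximum unless $V(F)=V(T)$, in which case $e_m(F)=e(X_F)=0$.)

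Finally, chaining the two steps: $F \in C(T;m,1)$ iff $e_m(F) \le e_m(F')$ for every $F' \in \mathbb{F}_{T,m}$, iff $e(X_F) \le e(Y)$ for every $Y \subseteq V(T)$ with $|Y|=m$, iff $X_F \in center_m(T)$. Hence the bijection $F \mapsto X_F$ restricts to a bijection between $C(T;m,1)$ and $center_m(T)$, which is the asserted equivalence. I do not anticipate a real obstacle: the argument is a direct translation of definitions, and the only place demanding a moment's care is the disjointness observation that turns the forest of singleton subtrees into a genuine $m$-element set rather than a multiset with possible repetitions.
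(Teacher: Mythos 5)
Your proposal is correct and follows essentially the same route as the paper's own proof: identify a forest of $m$ order-$1$ subtrees with the $m$-element set of its nodes, observe that $e_m(F)$ then coincides with $e(X)$ from Definition~\ref{mDefi}, and conclude that the minimization conditions agree. You simply carry out this definitional unwinding with more care (the explicit bijection and the remark about maximizing over $V(T)$ versus $V(T)\setminus V(F)$) than the paper does.
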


\begin{proof}

 As per Definition \ref{CFDefi},
 \[ C(T; m, k) \ \triangleq \ \{ F \ | \ e_m(F) \leq e_m(F'), \
 \forall F' \in \mathbb{F}_{T,m} \}. \] $F$ is a collection of $m$
 subtrees of $T$. If $F$ is a central forest, then the eccentricity of
 subtrees of $F$ is minimum among all other possible sets of
 $m$-subtrees of tree $T$.

 If the order of each subtree is $1$, then the set $X$ of these $m$
 nodes makes the central forest $F$ and we have
 \[ C(T; m,k) \ \triangleq \ \{X \ | \ e(X) \leq e(Y), \ \forall \, Y
 \subseteq V(T), |Y| = m\} \] By Definition \ref{mDefi}, this is the
 $m$-center of tree $T$. \qedhere

\end{proof}


\begin{proposition}

  The central-$k$-tree is a special case of central forest $C(T;m,k)$
  for tree $T$, i.e. $C(T;1,k)$ is equivalent to the central-$k$-tree
  of tree $T$.

\end{proposition}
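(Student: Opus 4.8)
This is essentially an unwinding of definitions, paralleling the proof of Proposition \ref{spcasemcenters}. The plan is to show that the family $\mathbb{F}_{T,1}$ of forests of exactly one subtree of order $k$ is literally the same as the family $W_{T,k}$ of subtrees of $T$ of order $k$, and that under this identification the eccentricity functional $e_1(\cdot)$ from Definition \ref{CFDefi} agrees with the eccentricity $e(\cdot)$ used in Definition \ref{cktDefi}. Once both the domain and the objective function coincide, the argmin sets $C(T;1,k)$ and $\mathrm{Central}$--$k$--$\mathrm{tree}(T)$ coincide as well.

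First I would observe that a forest of $m=1$ subtrees is, by Definition \ref{termsDefi}(o), just a single subtree; since each subtree in a member of $\mathbb{F}_{T,1}$ has order $k$, we get $\mathbb{F}_{T,1} = W_{T,k}$, the set of all subtrees of $T$ of order $k$. Next I would check the objective functions: for $F = W$ a single subtree, $d(x,F)$ in Definition \ref{CFDefi} is $\min_{y \in W} d(x,y) = d(x,W)$, the node-to-set distance already used implicitly in Definition \ref{cktDefi} when speaking of $e(W)$. Hence $e_1(F) = \max_{x \in V(T)} d(x,W) = e(W)$. Substituting these two identifications into Definition \ref{CFDefi} gives
\[ C(T;1,k) = \{ W \mid e(W) \leq e(W'),\ \forall W' \in W_{T,k} \}, \]
which is exactly Definition \ref{cktDefi} of the central-$k$-tree (the side condition $k \leq n$ there matching Remark \ref{remark1} with $m=1$). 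That finishes the argument.

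There is essentially no obstacle here; the only thing that needs a sentence of care is that the notation $e(W)$ for a \emph{set} $W$ in Definition \ref{cktDefi} is really the $m$-center-style eccentricity of Definition \ref{mDefi} (max over nodes of $T$ of the distance to the nearest node of $W$), not the graph-theoretic eccentricity $e(x)$ of a single node from Definition \ref{termsDefi}(l) — I would make this explicit so the reader sees that $e(W)$ and $e_1(W)$ are the same quantity. With that clarification the proof is a direct substitution.
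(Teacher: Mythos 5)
Your proof is correct and takes essentially the same route as the paper's own proof: both simply specialize Definition~\ref{CFDefi} to $m=1$, identify a one-subtree forest with a single subtree $W$ of order $k$ (so $\mathbb{F}_{T,1}=W_{T,k}$), and observe that the resulting minimization condition is precisely Definition~\ref{cktDefi}. Your explicit note that $e(W)$ in Definition~\ref{cktDefi} must be read as the set-eccentricity of Definition~\ref{mDefi}, so that $e_1(W)=e(W)$, is a small point of care the paper glosses over, but it does not change the argument.
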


\begin{proof}

As per Definition~\ref{CFDefi},  
\[ C(T; m, k) \ \triangleq \ \{ F \ | \ e_m(F) \leq e_m(F'), \ \forall
F' \in \mathbb{F}_{T,m} \}. \] The central forest $F$ is a collection
of $m$ subtrees of $T$, each subtree of order $k$. If $F$ is a central
forest, then the eccentricity of subtrees of $F$ is minimum among all
other possible sets of $m$-subtrees of tree $T$.

If there is only one subtree $W$ in $F$ of order $k$, then this
subtree makes up the central forest and we have
\[ C(T; m, k) \ \triangleq \ \{ W \ | \ e_m(W) \leq e_m(W'), \ \forall
W' \in \mathbb{F}_{T,m} \}. \] By Definition \ref{cktDefi}, this is a
central-$k$-tree in tree $T$.

\end{proof}


\begin{observation}

  The definition of a central forest is a
  \index{generalization}generalization of the $m$-center and the
  central-$k$-tree.


\end{observation}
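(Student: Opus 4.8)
The plan is to obtain this Observation directly from Proposition~\ref{spcasemcenters} and the proposition immediately preceding it, since to say that the family $\{C(T;m,k)\}$ \emph{generalizes} the $m$-center and the central-$k$-tree is precisely to say that each of those two notions is recovered by fixing one of the two parameters of $C(T;m,k)$.

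First I would spell out what ``generalization'' is being claimed: a two-parameter family $\{C(T;m,k)\}$ generalizes a one-parameter notion if there is a substitution of one of its parameters under which the family, viewed as a set of optimal node-sets, coincides with that notion. Then the two required specializations are exactly the two propositions already established --- taking $k=1$, Proposition~\ref{spcasemcenters} gives $C(T;m,1)=center_m(T)$; taking $m=1$, the following proposition gives that $C(T;1,k)$ is the central-$k$-tree of $T$ --- so no further argument is needed for the inclusion of both classical notions.

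Second, to see that this is a genuine (strict) generalization rather than a mere common reformulation, I would observe that the family is honestly two-dimensional: as soon as $m\ge 2$ and $k\ge 2$, subject to $mk\le|V(T)|$ (Remark~\ref{remark1}), a central forest is neither an $m$-center, because its $m$ parts are subtrees of order $k>1$ rather than single nodes, nor a central-$k$-tree, because it is disconnected into $m>1$ components. The instance $C(T;2,3)$ of Figure~\ref{CMTK} already exhibits such a forest, so both classical notions sit properly inside $\{C(T;m,k)\}$.

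The only delicate point --- and the closest thing here to an obstacle --- is keeping track of the side conditions attached to each notion. The central-forest hypothesis $mk\le|V(T)|$ specializes at $m=1$ to $k\le|V(T)|=n$, which is exactly the hypothesis on central-$k$-trees in Definition~\ref{cktDefi}, so that case is clean; at $k=1$ it specializes to $m\le n$, whereas the $m$-center carries the additional feasibility restriction $\Delta T\le m$ of Remark~\ref{pigeonhole}. Thus the Observation should be read with the understanding that in the $k=1$ specialization one imposes the same restriction $\Delta T\le m$ that Minieka requires; with that reading it is simply the conjunction of the two preceding propositions and needs no independent proof.
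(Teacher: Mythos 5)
Your proposal is correct and matches the paper's (implicit) justification: the Observation is simply the conjunction of Proposition~\ref{spcasemcenters} (the $k=1$ case recovering the $m$-center) and the immediately following proposition (the $m=1$ case recovering the central-$k$-tree), and the paper offers no further argument. Your additional remarks on strictness and on the side conditions $mk\le|V(T)|$ and $\Delta T\le m$ go slightly beyond what the paper records but do not change the route.
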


\subsection{Types of Subtrees in the Central Forest}

There is only one type of tree of order 2, and only one of order 3, so
\(C(T; m, 2)\) and \(C(T; m, 3)\) do not split into cases implied by
the types of trees of those orders.  In general, however, it is
possible to have multiple sub-cases for \(C(T; m, k)\).  For instance,
since there are two types of trees of order 4 (Figure~\ref{size4}),
there are \emph{three} possible cases for \(C(T; m, 4)\) (those
containing subtrees of the first kind, those containing subtrees of
the second kind, and those containing both kinds).  In general, \(C(T;
m, k)\) is the union of the various sub-cases, some of which may be
empty.  The number of nonisomorphic trees of order $k = 1, 2, 3,
\ldots$ are $1, 1, 1, 2, 3, 6, 11, 23, 47, 106, 235, 551, 1301,$
$3159, \ldots$. The generating function for this sequence is
\[ A(x) = 1 + T(x)-\frac{T^2(x)}{2}+\frac{T(x^2)}{2} \]
where \[ T(x) = x + x^2 + 2x^3 + 4x^4 + \ldots \] satisfies 
\[ T(x) = x \exp(T(x)+\frac{T(x^2)}{2}+\frac{T(x^3)}{3}+\frac{T(x^4)}{4}+ \ldots)\] as shown by Sloane \cite{Sloane} and the references therein.

\begin{remark}
  If $x$ is the number of nonisomorphic trees of order $k$, then the
  maximum number of possible combinations of different types of
  subtrees for central forest $C(T;m,k)$ is $x^m$.
\end{remark}

Each subtree can be of one of the $x$ types of nonisomorphic
trees. There are total $m$ subtrees in a central forest. So the
maximum number of combinations of different types of subtrees for a
central forest $C(T;m,k)$ is $x^m$.

\begin{figure}[htp]     
\centering
\includegraphics [scale=.35, angle=-90]{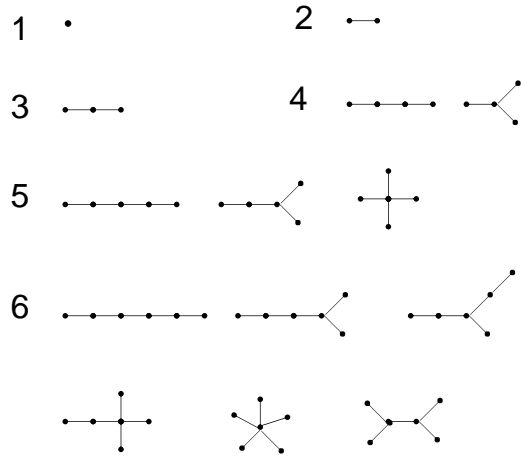}
\caption{Nonisomorphic trees of order $1,2,3,4,5,6$}
\label{size4}
\end{figure} 



\subsection{Properties of Central Forests in Trees}
\begin{observation}
  The $1$-centers of the members of $C(T; m, k)$, do not necessarily
  consist of $m$-center of $T$,\( \forall m, 1 \leq m \leq \lfloor
  \frac{|V(T)|}{k}\rfloor\).
\end{observation}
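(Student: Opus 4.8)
Since the statement asserts that a property \emph{fails} to hold in general, the natural strategy is to exhibit a single counterexample: a tree $T$, an admissible value of $m$, and a central forest $F\in C(T;m,k)$ for which the union of the $1$-centers of the $m$ subtrees of $F$ is not an $m$-center of $T$. The plan is to use an asymmetric subdivided star. Concretely, let $T$ consist of a hub $c$, one long leg $c-a_1-a_2-\cdots-a_7$, and two further leaves $b_1,d_1$ adjacent to $c$; then $|V(T)|=10$ and $\Delta(T)=3$. Take $m=3$ and $k=3$, so that $\Delta(T)\le m$ and $mk=9\le|V(T)|$, as required by Remarks~\ref{pigeonhole} and~\ref{remark1}.

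The first step is to pin down a central forest. I would note that $mk=9<10=|V(T)|$ forces $e_m(F)\ge 1$ for every $F\in\mathbb{F}_{T,3}$, and that the forest $F=\{\{b_1,c,d_1\},\{a_1,a_2,a_3\},\{a_5,a_6,a_7\}\}$ attains $e_3(F)=1$, since its only uncovered node, $a_4$, is adjacent to a node of $F$. Hence $F\in C(T;3,3)$. The $1$-centers of its three members are $\{c\}$, $\{a_2\}$ and $\{a_6\}$, so the union of member $1$-centers is $X=\{c,a_2,a_6\}$.

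Next I would compute $e(X)$ and compare it with the optimal $m$-center eccentricity. A short check over the ten nodes gives $e(X)=2$, the maximum being attained at $a_4$, whose distance to each of $c,a_2,a_6$ is at least $2$. On the other hand, $X'=\{c,a_3,a_6\}$ is a dominating set of $T$, so $e(X')=1$; and since $m=3<|V(T)|$, no $3$-element subset has eccentricity $0$, so the minimum eccentricity over all $3$-element subsets is exactly $1$. Therefore $e(X)=2>1=e(X')$, so $X$ is not an $m$-center of $T$, which proves the observation. It is worth noting that a different central forest, e.g.\ $\{\{b_1,c,d_1\},\{a_2,a_3,a_4\},\{a_5,a_6,a_7\}\}$, does have $X'$ as its union of member $1$-centers; the content of the statement is precisely that this need not happen.

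The only genuine obstacle is locating a suitable counterexample. When $\Delta(T)\le 2$, i.e.\ $m=2$, the tree is a path, and the evenly spaced $P_k$'s of a central forest have evenly spaced centers, which do constitute an $m$-center — so one is forced to $m\ge 3$ and to an asymmetric branching tree in which the subtree of the central forest covering the hub is \emph{anchored} at the hub while the efficient $m$-center shifts its representative down the long leg. Once the example is in hand, every verification (optimality of $e_3(F)=1$, the value $e(X)=2$, and the existence of a $3$-set with eccentricity $1$) is an elementary finite computation.
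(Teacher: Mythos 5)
Your proof is correct and follows the same strategy as the paper: both establish the observation by exhibiting a counterexample tree together with a particular central forest whose member $1$-centers fail to form an $m$-center (the paper points to a figure with $m=2$, $k=4$; you construct a fully explicit subdivided star with $m=3$, $k=3$ and verify every eccentricity by hand). Your example satisfies the constraints $\Delta(T)\le m$ and $mk\le |V(T)|$ and all the computations check out, so this is essentially the paper's argument with a self-contained, verifiable instance.
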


We can show this with the help of an example of a $C(T;m,k)$. Consider
a tree $T$ as shown in Figure \ref{mextree}, with small ovals around
the nodes $2$ and $5$ indicating that these two nodes are $2$-centers
of the tree $T$. In this figure the big ovals around the nodes
represent the $2$ subtrees of order $4$ for $C(T;2,4)$. If we prune
these subtrees, we get the node $6$ as one of the $1$-center, which is
not originally a node in the $m$-center of the tree $T$. Therefore,
the example clearly shows that $1$-centers of the members of $C(T; m,
k)$, do not necessarily consist of $m$-center of $T$.

\begin{figure}[htp]     
\centering
\includegraphics [scale=.25, angle=-90]{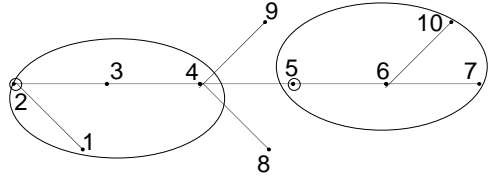}
\caption{A central forest $C(T;2,4)$}
\label{mextree}
\end{figure} 

\begin{theorem}      \label{contains}
Every \(C(T;m,k)\) contains a \(C(T;m,k-1), \
\forall m, \ 1 \leq m \leq \lfloor \frac{|V(T)|}{k}\rfloor\).
\end{theorem}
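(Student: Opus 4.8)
The plan is to fix an arbitrary central forest $F=\{W_1,\dots,W_m\}\in C(T;m,k)$ (note $k\ge 2$, so each $W_i$ is a subtree with at least two leaves), put $r:=e_m(F)=e_m(C(T;m,k))$, and build a member of $C(T;m,k-1)$ \emph{inside} $F$ by deleting exactly one suitably chosen leaf $\ell_i$ from each $W_i$. For any such choice, $F':=\{W_1-\ell_1,\dots,W_m-\ell_m\}$ lies in $\mathbb{F}_{T,m}$ with all subtrees of order $k-1$, and since each $\ell_i$ is adjacent to a surviving node of $W_i$ we have $d(x,F')\le d(x,F)+1$ for every $x\in V(T)$, hence $e_m(F')\le r+1$ \emph{regardless of the choice}. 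On the other hand $e_m(F')\ge e_m(C(T;m,k-1))$ simply because $F'$ is a legal $(k-1)$-forest. So the whole task reduces to choosing the $\ell_i$ so that $e_m(F')=e_m(C(T;m,k-1))$.

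Next I would pin down $e_m(C(T;m,k-1))$ to one of two values. First, monotonicity: $e_m(C(T;m,k-1))\ge r$. Indeed, if some $m$ node-disjoint subtrees of order $k-1$ covered $T$ within radius $\rho<r$, one could complete them to $m$ node-disjoint subtrees of order \emph{exactly} $k$ covering $T$ within radius $\le\rho$ (using that $T$ is connected and $mk\le|V(T)|$, so there are just enough unused nodes to enlarge the subtrees, possibly after relocating some of them), contradicting the optimality of $r$ for order $k$. Combined with $e_m(F')\le r+1$ applied to any $F'$, this forces $e_m(C(T;m,k-1))\in\{r,r+1\}$. If $e_m(C(T;m,k-1))=r+1$, we are already done: for \emph{any} choice of the $\ell_i$ we then get $r+1\le e_m(F')\le r+1$, so $F'\in C(T;m,k-1)$ and $F'\subseteq F$.

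The substantive case is $e_m(C(T;m,k-1))=r$, where the leaves must be selected with care. Call a node $x$ \emph{critical} if $d(x,F)=r$. Deleting a set of leaves from $F$ keeps the covering radius $\le r$ precisely when no critical node loses \emph{all} of its nearest points of $F$; a single leaf $\ell$ of $W_i$ is ``dangerous'' only if some critical node reaches $F$ exclusively through $\ell$, which happens only when $\ell$ carries a branch of $T$ of depth exactly $r$. The plan is to pick, for each $W_i$, a non-dangerous leaf $\ell_i$, coordinating the choices across the $m$ subtrees so that no critical node has all of its (necessarily $\ge 2$, and in distinct subtrees) nearest points among the chosen $\ell_i$. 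The resulting $F'$ then has $e_m(F')\le r$, hence $e_m(F')=r=e_m(C(T;m,k-1))$, so $F'\in C(T;m,k-1)$ with $F'\subseteq F$, finishing the proof.

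The hard part — and the step I would spend most of the effort on — is showing that such a choice actually exists when $e_m(C(T;m,k-1))=r$. One must rule out a ``rigid'' central $k$-forest, namely one having a subtree $W_i$ every leaf of which carries a depth-$r$ branch (so that $W_i$ cannot be shrunk at all), by showing that this is incompatible with the existence of a $(k-1)$-forest of eccentricity $r$: placing that forest alongside $F$ in $T$ should force the rigid subtree to be shrinkable, a contradiction. The coordination across subtrees (handling critical nodes equidistant from two subtrees) is a secondary nuisance to be absorbed into the same argument, and the subsidiary claim invoked in the monotonicity step — that $m$ disjoint order-$(k-1)$ subtrees with a given covering radius can be replaced by $m$ disjoint order-$k$ subtrees with no larger covering radius — also needs its own short argument, since $\mathbb{F}_{T,m}$ requires subtrees of order exactly $k$.
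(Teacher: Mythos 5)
Your reduction is set up sensibly---fix $F$ with $e_m(F)=r$, note that deleting one leaf per subtree yields a legal $(k-1)$-forest $F'\subseteq F$ with $e_m(F')\le r+1$, and split on whether the optimal $(k-1)$-eccentricity equals $r$ or $r+1$---and this is a genuinely different and more honest route than the paper's own argument, which simply prunes each subtree to order $k-1$ and asserts that the eccentricity ``will also be minimum.'' You have correctly located where the content of the theorem lives. The problem is that you then stop there: what you have written is a plan whose two load-bearing steps are both left unproved.

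First, the monotonicity claim $e_m(C(T;m,k-1))\ge r$ rests on the assertion that $m$ disjoint order-$(k-1)$ subtrees of covering radius $\rho$ can be completed to $m$ disjoint order-$k$ subtrees of radius at most $\rho$. Growing a subtree never increases any distance, but disjointness can obstruct the growth: a subtree all of whose neighbours in $T$ lie inside other subtrees cannot be enlarged even though free nodes exist elsewhere in the tree, and your escape hatch of ``relocating some of them'' forfeits exactly the control over the radius that the argument needs. Second, and more seriously, in the case $e_m(C(T;m,k-1))=r$ you explicitly defer ``the hard part'': showing that a coordinated choice of non-dangerous leaves $\ell_1,\dots,\ell_m$ exists, equivalently that no subtree of $F$ can be rigid (every leaf carrying a depth-$r$ branch) while some radius-$r$ forest of order-$(k-1)$ subtrees exists elsewhere in $T$. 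That statement \emph{is} the theorem in the only nontrivial case, and no argument is offered for it beyond the expectation that one should exist; the cross-subtree coordination for critical nodes equidistant from two subtrees is likewise waved at rather than handled. Until both gaps are filled, the proposal does not establish the result.
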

\begin{proof}

  If we prune any tree of order $n$ to a tree of order $k$, we get a
  central-$k$-tree $W$, and by Definition \ref{cktDefi} we have
\[ \mathrm{Central}$--$k$--$\mathrm{tree}(T) \ \triangleq \ \{ W \ | \ e(W) \leq e(W'), \ \forall W' \in W_{T,k}, k\leq n\}. \]

As in Definition \ref{cktDefi}, $W_{T,k}$ is all subtrees of $T$ of
order $k$.  Now, if we prune $W$ to a tree $W'$ of order $k-1$, the
eccentricity of $W'$ will also be minimum.

Thus, if we prune each member of $C(T;m,k)$ to subtrees of order
$k-1$, we get all the members with minimum eccentricity. That is
\[ C(T; m,k-1) \ \triangleq \ \{ F \ | \ e_m(F) \leq e_m(F'), \ \forall F' \in \mathbb{F}_{T,m} \}. \]

This above equation is the definition of $C(T;m,k-1)$. Hence, every
\(C(T; m, k)\) contains a \(C(T;m,k-1)\).  \qedhere

\end{proof}

\begin{theorem} \label{mcenterpart} 

  All nodes of any $m$-center are part of members of some $C(T;m,k)$
  of tree $T$, \( \forall m, 1 \leq m \leq \lfloor
  \frac{|V(T)|}{k}\rfloor\).

\end{theorem}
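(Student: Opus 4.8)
The quickest route combines the two results just proved. By Proposition~\ref{spcasemcenters}, $C(T;m,1)$ is exactly the family of $m$-centers of $T$, and by Theorem~\ref{contains}, applied $k-1$ times, pruning a member of $C(T;m,k)$ one node per subtree at a time yields a member of $C(T;m,1)$. The statement follows once we know this pruning correspondence is \emph{onto}: if every $m$-center is the pruned core of some central forest of $m$ order-$k$ subtrees, then for an arbitrary $m$-center $X=\{x_1,\dots,x_m\}$ we may choose $F\in C(T;m,k)$ whose core is $X$, and since a forest always contains its pruned core we get $X\subseteq V(F)$ with $x_i$ in the $i$-th subtree of $F$; thus all nodes of $X$ are part of a member of $C(T;m,k)$. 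So the real work is to establish the surjectivity that the proof of Theorem~\ref{contains} asserts.

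I would do this by induction on $j$: every member $F_j$ of $C(T;m,j)$ is the one-node-per-subtree pruning of some member $F_{j+1}$ of $C(T;m,j+1)$. Starting from $F_1=X$ (a legitimate member of $C(T;m,1)$ by Proposition~\ref{spcasemcenters}) and iterating gives the required $F\in C(T;m,k)$. Equivalently, $F_j$ must be enlarged by attaching one fresh vertex to each of its $m$ subtrees so that the enlarged subtrees stay pairwise disjoint and the resulting $F_{j+1}\in\mathbb{F}_{T,j+1}$ has minimum eccentricity over $\mathbb{F}_{T,j+1}$. The eccentricity accounting rests on two elementary facts — enlarging a forest never increases $e_m$, and deleting one leaf from each subtree of a forest increases $e_m$ by at most $1$ — which together pin the minimum of $e_m$ over $\mathbb{F}_{T,j+1}$ to be either the minimum over $\mathbb{F}_{T,j}$ or that value minus $1$. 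When it equals the former, any legal enlargement of $F_j$ suffices; when it drops by $1$, one extends each subtree one step along a shortest path toward a vertex currently realising $e_m(F_j)$.

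The main obstacle is exactly the ``drop by $1$'' case together with the disjointness bookkeeping. For disjointness I would use the structure of $T-V(F_j)$: each connected component of $T-V(F_j)$ meets $V(F_j)$ in a single vertex and so can be attributed to a unique subtree, and because the vertex budget $mk\le|V(T)|$ is available and $C(T;m,k)$ is nonempty there is always a consistent choice of one growth vertex per subtree. The genuinely delicate point is showing that \emph{this particular} optimal $j$-forest $F_j$ — not merely some optimal $j$-forest — can be grown to recover the smaller eccentricity; I would approach it by first moving $F_j$, through leaf-exchange steps that preserve optimality for order $j$, to one all of whose eccentricity-realising vertices lie in directions into which its subtrees can still grow, after which the ``grow toward a far vertex'' rule works as intended. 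A reassuring special case is $m=1$: there the $F_j$ are just the nested central $j$-trees cut out of $T$ by repeated leaf-pruning, the $1$-center is the terminal stage of that pruning, and hence it sits inside every such central $j$-tree, which is the claim for $m=1$.
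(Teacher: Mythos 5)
Your proposal follows the same skeleton as the paper's proof: induction on $k$, with the base case supplied by Proposition~\ref{spcasemcenters} and the inductive step resting on Theorem~\ref{contains}. You have correctly put your finger on the weak joint: Theorem~\ref{contains} only says that a member of $C(T;m,k)$ contains \emph{some} member of $C(T;m,k-1)$, whereas the induction really needs the reverse, ``onto'' direction --- that the particular optimal $(k-1)$-forest containing the given $m$-center can be \emph{grown} to an optimal $k$-forest. The paper's own proof simply chains the two statements without addressing this, so your diagnosis is a genuine sharpening. The trouble is that your repair remains a plan rather than a proof, and the plan has concrete holes.

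Specifically: (i) the crucial step --- that the specific $F_j\in C(T;m,j)$ containing the $m$-center $X$ admits a one-vertex-per-subtree enlargement that is again optimal --- is exactly the ``drop by $1$'' case you flag, and you do not resolve it; the ``leaf-exchange steps that preserve optimality'' are never defined, and nothing guarantees such exchanges keep $X$ inside the forest (an exchange could swap out a vertex of $X$ itself, defeating the whole purpose). (ii) The disjointness bookkeeping is wrong as stated: a connected component of $T-V(F_j)$ is disjoint from $V(F_j)$, and while in a tree it can be adjacent to at most one vertex of any single subtree, it may be adjacent to several \emph{different} subtrees (e.g.\ one free vertex sitting between two order-$1$ subtrees), so it cannot in general be ``attributed to a unique subtree''; moreover the global budget $mk\le|V(T)|$ does not by itself guarantee that every subtree has a free neighbour at every stage, so simultaneous growth of all $m$ subtrees needs an actual argument. (iii) Your eccentricity accounting (the optimum over $\mathbb{F}_{T,j+1}$ equals the optimum over $\mathbb{F}_{T,j}$ or that value minus $1$) is correct but only constrains the optimal \emph{value}; it does not produce an optimal enlargement of the given $F_j$. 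So the key surjectivity lemma on which both your argument and, implicitly, the paper's rest is still unproved.
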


\begin{proof}

  We prove this theorem by induction on $k$.

  Base case: For $k=1$, the $m$ subtrees have only one node each. Then
  by Proposition \ref{spcasemcenters}, the members of $C(T;m,1)$
  contain an $m$-center.

  Inductive step: Let members of $C(T;m,k-1)$ contain an $m$-center of
  tree $T$.  By this assumption, the $m$ members of $C(T;m,k-1)$
  contain $m$-center, and by Theorem \ref{contains}, every $C(T;m,k)$
  contains a $C(T;m,k-1)$. Therefore, the members of $C(T;m,k)$ also
  contain $m$-center of tree $T$. \qedhere

\end{proof}

\subsection{The Upper Bound on the Order of Subtrees in a Central Forest}  \label{upbound}

By Remark~\ref{remark1}, we have a bound $mk$ $\leq$ $|V|$ that
implies $k$ $\leq$ $\frac{|V|} {m}$. But this is a very loose bound as
the bound on $k$ depend on the topology of the tree $T$.


\begin{figure}[htp]
\begin{center}
\includegraphics [scale=.25, angle = -90] {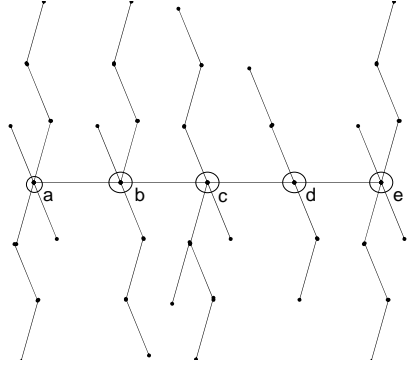}
\caption{A tree with adjacent nodes forming the $m$-center.}
\label{adjacentm}
\end{center}
\end{figure} 


\begin{remark}   \label{adjm}
  When all the nodes of the $m$-center are adjacent to each other, the
  upper bound on $k$ can be obtained by pruning the edges connecting
  the nodes of $m$-center. This pruning of edges give us $m$
  subtrees. In central forest, each subtree should be of same
  order. Therefore, the subtree with minimum number of nodes decide
  the order for all subtrees in central forest. And that is the upper
  bound on $k$.
\end{remark}

An example tree where the nodes of the $m$-center are adjacent to each
other is shown in Figure \ref{adjacentm}. In this figure the
$5$-center is $\{a, b, c, d, e\}$, as shown by small ovals around the
nodes. By Remark \ref{adjm}, if we prune the edges connecting this
$5$-center, $d$ has minimum number of nodes ($4$) in its subtree. So
the upper bound on $k$ is $4$.

But when the nodes of the $m$-center are not adjacent, finding the
upper bound on $k$ is very difficult as it depends on the topology of
the tree. In Section \ref{algos}, we give an algorithm for
constructing a central forest. This algorithm gives us the upper bound
on $k$ when a central forest of $m$ subtrees, for the required value
of $k$ is not possible.

\section{An Algorithm \textsc{CF} to Construct a Central Forest} \label{algos}

Given the topology of the tree $T$, we present an algorithm
\textsc{CF} to construct the central forest $C(T;m,k)$ as defined in
Section~\ref{CF}. The strategy followed by this algorithm is to divide
the nodes of $T$ into $m$ subtrees such that the nodes are assigned to
their nearest node in
$m$-center.

The number of nodes in these subtrees may differ as per the topology
of the tree $T$. There are two cases to consider, first when $k$ (the
order of each subtree in the central forest) is less than or equal to
the number of nodes in each of the $m$ subtrees. Second, when for one
or more subtrees, $k$ is greater than the number of nodes present in
those subtrees.

In the first case, we simply use the pruning algorithm given by
McMorris and Reid ~\cite{McMorrisReid1997} to get a
central-$k$-subtree for all the $m$ subtrees. These
central-$k$-subtrees for all subtrees give us the central forest
$C(T;m,k)$ in tree
$T$.

In the second case, we try to extend all the subtrees with less nodes
than $k$, by taking nodes from the neighboring subtrees. The neighbor
subtree can itself take nodes from its neighbors, and so on. In this
way, the second case is converted to the first. While taking nodes
from other subtrees, the nodes that give the minimum increases in
eccentricity of all subtrees are chosen. But if one or more subtrees
cannot be extended to contain $k$ nodes, the \textsc{CF} algorithm
outputs the minimum of number of nodes in a subtree among all
subtrees, as the maximum order of the subtrees for which a central
forest is possible.

Algorithm \ref{ST} builds the $m$ subtrees around the $m$-center by
assigning each node to its nearest node in $m$-center. Algorithm
\ref{extendST} is used to extend one or more subtrees, if
required. Pruning of subtrees, to get subtrees of order $k$, is done
using Algorithm \ref{pruneST}. The Algorithm \ref{algoCF} is the main
algorithm which uses above said algorithms and outputs either the
central forest or the maximum value of $k$ for which central forest of
$m$ subtrees is possible for tree $T$.

\subsection{Notation Used in Algorithms}   \label{not}
\begin{itemize}

\item The set \index{$V(T)$ set}$V(T)$ is the set of nodes in tree
  $T$. The number of subtrees in the central forest is denoted by
  \index{$m$}$m$. The order of each subtree in the central forest is
  \index{$k$}$k$.

\item The set $V_m$ holds all the nodes that make the $m$-center. In
  $V_m$ all the nodes of the $m$-center are indexed as per their
  occurrence.  \index{$V_m$ set}

\item The two-dimensional matrix \index{$ST$ matrix}$ST$ contains all
  the nodes in the tree $T$ which are not a part of the set $V_m$. All
  nodes are assigned to the nearest node in the $m$-center. Each row
  of matrix $ST$ holds one of the nodes in $m$-center and all the
  nodes assigned to that node of $m$-center. The nodes of the
  $m$-center in $ST$ matrix appear in the order as in set $V_m$. The
  number of nodes in each row of $ST$ can be different, as it depends
  on how many nodes are assigned to each node of the $m$-center. If
  two or more nodes of the $m$-center are at equal distance from a
  node, then that node is made to wait till end and then it is
  assigned to the node of $m$-center which has lesser number of nodes
  in its corresponding row in matrix $ST$. If two or more nodes of the
  $m$-center have equal number of nodes in their corresponding row in
  the matrix $ST$, then the node is assigned to the node of $m$-center
  with lower value of index in set $V_m$.

\item The one-dimensional matrix \index{$H$ matrix}$H$ holds all nodes
  except the nodes of $m$-center.

\item The two-dimensional matrix $u[r]$ holds the indices of all the
  nodes of the $m$-center that are at equal distance from node $r$.

\item The one-dimensional matrix \index{$z[l]$ matrix}$z[l]$ holds the
  number of nodes in the $l^{th}$ row of $ST$, i.e. $ST[l]$.

\item \index{$x[r]$ matrix}The $x[r]$ is a two dimensional matrix that
  stores the nodes that has to be removed from $ST[r]$ in order to
  remove $v$ as they are reachable via $v$ only, for nodes in
  $ST[r]$. $|x[r]|$ is the number of nodes in $x[r]$ . If not set,
  $|x[r]|$ holds value $0$.


\item The one-dimensional matrix \index{$d[r]$ matrix}$d[r]$ holds the
  number of nodes left in $ST[r]$ when $|x[r]|$ nodes are taken out.

\item The one-dimensional matrix \index{$n[r]$}$n[r]$ holds the number
  of nodes returned by \textsc{ExtendST} algorithm.

\item The variable $color[v]$ holds either color `$White$' or
  `$Gray$'. Initially all the nodes have `$White$' color.
  \index{$color[v]$}

\item The three dimensional matrix $B$, is used to store all the $ST$s
  that is produced by the algorithm.  \index{matrix $B$}

\item \index{$C$ matrix} $C$ is a two-dimensional matrix which stores
  one row of $ST$ matrix at a time and keeps changing as the algorithm
  proceeds.

\item \index{$L$ matrix} $L$ is a two-dimensional matrix which holds
  the end nodes of $C$ at various stages.

\item \index{$X$} The set $X$ is any subset of $L(i)$ which has
  $k-|V(C)|$ nodes, where $i$ is the index of the row of the $L$
  matrix.
  
\item \index{$U$ matrix} $U$ is a two-dimensional matrix, each row of
  which holds subtrees of order $k$, that collectively form the
  central forest.


\item The two-dimensional matrix $A$ contains $m$ rows and $k$
  columns. Each row of this matrix represents one of the $m$ subtrees
  of order $k$. In other words, $A$ holds the central forest
  $C(T;m,k)$ of $T$.

\index{$A$ matrix}
\item \index{function!\textsc{ExtractMin}($r,V_m$)}The \textsc{ExtractMin}($r,V_m$) function returns the distance between the node $r$ and the nearest node in $V_m$.
 

\item The \index{function!\textsc{ExtractMinNum}($r,u[r]$)}function
  \textsc{ExtractMinNum}($r,u[r]$) returns the minimum number of nodes
  among the rows of $ST$ belonging to row $u[r]$.

\item \index{function!\textsc{ExtractMinNumRow}($r,u[r]$)}The function
  \textsc{ExtractMinNumRow}($r,u[r]$) returns the $ST$ row that has
  minimum number of nodes among the rows of $ST$ belonging to row
  $u[r]$. If two or more rows have minimum number of nodes, then the
  row with lower index value is returned.

\item \index{function!\textsc{NodesToBeRemoved}($r,v$)}The
  \textsc{NodesToBeRemoved}($r,v$) is a function that returns the
  nodes that are reachable for any node in $ST[r]$, only through $v$,
  including $v$.

\item \index{function!\textsc{AddRemoveNodes}($x[r]$, $ST[l]$,
    $ST[r]$)}The function \textsc{AddRemoveNodes}($x[r]$, $ST[l]$,
  $ST[r]$) adds, the nodes in matrix $x[r]$, from the $r^{th}$ row to
  the $l^{th}$ row of the $ST$ matrix.

\item \index{function!\textsc{Store}($ST, B$)} The function \textsc{Store}($ST, B$) stores the $ST$ in $B$.

\item \index{function!\textsc{GetSTMinEccentricity}($B$)}The function
  \textsc{GetSTMinEccentricity}($B$), returns the $ST$ that fulfills
  the requirement with minimum increase of eccentricity among all the
  $ST$s stored in $B$. If no $ST$ stored in $B$ fulfills the
  requirement, then the $ST$ that adds maximum number of nodes is
  returned.

\item \index{function!\textsc{GetNodesAdded}($ST, y$)} The function
  \textsc{GetNodesAdded}($ST, y$) returns the number of nodes added in
  $y^{th}$ row of $ST$.

\item \index{function!\textsc{GetSTRow}($v$)}The function
  \textsc{GetSTRow}($v$) returns the index of the row of $ST$ which
  has $v$ node.

\item \index{function!\textsc{GetMinSTRow}($ST$)}The function
  \textsc{GetMinSTRow}($ST$) returns the index of row of $ST$ which
  has minimum number of nodes.

\item \index{function!\textsc{ExtractMinNodes}($ST$)}The function
  \textsc{ExtractMinNodes}($ST$) extracts the minimum number of nodes
  among all the rows of $ST$.

\item \index{function!\textsc{GetEndNodes}($C$)}The function
  \textsc{GetEndNodes}($C$) returns the nodes that are end nodes in
  subtree $C$.

\item \index{function!\textsc{GetSubset}($L(i)$,$j$)}The function
  \textsc{GetSubset}($L(i)$,$j$) returns one of the possible subsets
  of order $j$ from the $i^{th}$ row of matrix $L$.
\end{itemize}

\subsection{Algorithms}

Firstly, we present the algorithms used by the main \textsc{CF}
algorithm. These algorithms build the $m$ subtrees ($ST$ matrix),
extend subtree ($ST$ matrix row) and prune the $m$ subtrees to get
subtrees of order $k$, respectively.

\subsubsection{Algorithm $2$: Building the $ST$ Matrix}

In the \textsc{BuildST} algorithm (Algorithm \ref{ST}), we build the
matrix $ST$ as defined in Section \ref{not}. The algorithm first gets
minimum distance of a node $r$ and its nearest node in
$m$-center. This distance is then compared with the distances of node
$r$ and other nodes in $V_m$. The indices of all the nodes in $V_m$
which are at minimum distance from node $r$ are stored. If there is
only one index stored for node $r$, then the node is added to the
corresponding row of the $ST$. This is repeated for all the nodes
which are in $V(T)$ but not in $V_m$. Then, at the end, for all those
nodes which are at the same distance from more than $1$ nodes in
$V_m$, the $ST$ row with minimum number of nodes among the indices
stored for each node is chosen, and node is added to the selected
row. If there are two or more $ST$ rows with minimum number of nodes,
then the node is added to $ST$ row with lower index value.

In pseudocode for Algorithm \ref{ST}, variable $min$ takes the
distance between node $H[x]$ and the nearest node in $m$-center.
Lines $7$--$9$ check, if $r$ is at equal distance from two or more
node of the $m$-center in $V_m$. If $\mathbf{TRUE}$ then array $u[r]$
holds the indices of the corresponding nodes of the $m$-center,
otherwise array $u[r]$ holds just the index of the nearest node in
$m$-center.

In lines $10$--$14$, if $u[r]$ has only one index, the node $r$ is
added to the $ST$ with index stored in $u[r]$, and $-1$ is added to
$u[r]$ to indicate that this node has been added to some row of
$ST$. The `while' loop in line $4$ repeats this for all nodes in $H$.

\incmargin{1em}
\restylealgo{boxruled}
\linesnumbered
\begin{algorithm}[]       \label{ST}    \index{\textsc{BuildST} Algorithm}
\dontprintsemicolon
\lFor {$i$ $\leftarrow$ $0$ \KwTo $m-1$}{ 
$ST[i][0]$ $\leftarrow$ $V_m[i]$ \; 
}
$H$ $\leftarrow$ $V(T) - V_m$ \; 
$x$ $\leftarrow$ 0 \;  
\While{$H[x]$ $\neq$ $\emptyset$}{ 
$min$ $\leftarrow$ \textsc{ExtractMin}($H[x],V_m$) \;
$h$ $\leftarrow$ $0$ \;
\For {$i$ $\leftarrow$ $0$ \KwTo $m-1$}{
\If {$min$ == $d(H[x],V_m[i])$}{   
$u[x][h++]$ $\leftarrow$ $i$\; 
}
\If{$h == 1$}{
$v$ $\leftarrow$ $0$ \;
\lWhile{$ST[i][v]$ $\neq$ $\emptyset$}{
$v \leftarrow v+1$ \;
}
$ST[i][v]$ $\leftarrow$ $H[x]$ \; 
$u[x][h-1]$ $\leftarrow$ $-1$ \;
}
}
$x \leftarrow x+1$ \;
}
$x$ $\leftarrow$ $0$ \;
\While{$H[x]$ $\neq$ $\emptyset$}{
\If{$u[x][0] \neq$ $-1$}{
$minNum$ $\leftarrow$ \textsc{ExtractMinNum}($u[x]$,$ST$) \;
$minNumRow$ $\leftarrow$ \textsc{ExtractMinNumRow}($u[x],ST$) \; 
$ST[minNumRow][minNum]$ $\leftarrow$ $H[x]$ \;
}
$x \leftarrow x+1$;
}
\Return($ST$) \;
\caption{\textsc{BuildST}($T$, $V_m$)}
\end{algorithm}

At line $18$, `if' condition evaluates $\mathbf{TRUE}$ for those nodes
which are at the same distance from more than one node of the
$m$-center. Line $21$ adds any such node $r$ to the $ST$ row with
minimum number of nodes among the $ST$ rows stored in row $u[r]$. If
two nodes of the $m$-center have equal numbers of nodes, then node $r$
is added to the $ST$ row with lower index value.

Line $23$ returns the $ST$ matrix in which each row contains the nodes
that are assigned to the first node (node in $m$-center) of the
corresponding row.
 
\subsubsection{Algorithm $3$: Extending a Row of the $ST$ Matrix} 

The \textsc{ExtendST} algorithm (Algorithm~\ref{extendST}) extends one
or more rows of $ST$ matrix by taking nodes from other rows of $ST$
matrix. A row $l$ can take nodes from its neighboring row i.e. a row
which has the other end of an edge whose one end is in the row
$l$. The Algorithm \ref{extendST} takes one by one all the nodes in
the $l^{th}$ row of $ST$, which we want to extend, and checks the
adjacent nodes of each node. If the adjacent node is in another row
then we explore that neighbor row. If the neighbor row has sufficient
number of nodes, then we take nodes from that row and add to row
$l$. Otherwise, the neighbor row itself can take nodes from its
neighbor row and serve the requirement of row $l$. When the row $l$
has required number of nodes then algorithm stores the current $ST$
and mark the neighbors that are visited during this formation of
$ST$. Algorithm again starts and in the same way, all the possible
neighbors are explored and corresponding $ST$s are stored. In the end,
the algorithm returns the $ST$ that fulfills the requirement with
minimum increase of eccentricity among all the $ST$s stored. If no
$ST$ fulfills the requirement, then $ST$ that adds maximum number of
nodes is returned. Also the number of nodes added in row $l$ is
returned.

The current $ST$ matrix, $l$, $y$ (index of $ST$ matrix row to
extend), the value of $k$ and $OldST$ matrix are passed as input to
this algorithm. Here, for the initial call of the algorithm $ST$ and
$OldST$ are same. Also values of $l$ and $y$ are exactly same. It
returns the new $ST$ and number of nodes added to row $y$ of $ST$.

In line $1$ of Algorithm~\ref{extendST}, $z[l]$ takes the number of
nodes in $ST[l]$. The `foreach' loop in line $2$ runs for every node
$u$ in $l^{th}$ row of $ST$, i.e. $ST[l]$. Second `foreach' loop
checks all the adjacent nodes of $u$.

If an adjacent node $v$ is not a part of $ST[l]$ and $color[v]$ is not
'$Gray$', then $v$ belongs to one of neighbors of $ST[l]$ that is not
explored yet. From lines $5$--$23$, the neighbor is explored to see if
we can take nodes from this neighbor.

If node $v$ is in $V_m$, then this neighbor of $ST[l]$ cannot give any
nodes, so we continue the `foreach' loop of line $3$ with next
adjacent node of $u$.


At line $5$, $stop$ is set to $\mathbf{FALSE}$ to indicate that some
neighbor is explored in this loop. At line $10$, $d[r]$ is compared
with $k$, if $d[r] \geq k$, then \textsc{AddRemoveNodes} function,
adds nodes in $x[r]$ to $ST[l]$ and removes from $ST[r]$. At line
$12$, the $color[v]$ is set to `$Gray$'.

At line $13$, if number of nodes added in $ST[l]$ minus number of
nodes $ST[l]$ has to give, is greater than $k$, then $l$ is compared
with $y$ to check whether it is the initial call or a recursive
call. If it is the initial call to the algorithm, then control comes
to line $25$. Otherwise, Algorithm \ref{extendST} returns the number
of nodes added in $ST[l]$. If the condition at line $13$ fails, then
it continues the `foreach' loop in line $3$ with other adjacent nodes
of node $u$.

If the condition at line $10$ fails, i.e. $d[r] < k$, then we need to
check if $ST[r]$ can be extended. In this case, we take some nodes
from neighbors of $ST[r]$ and give some nodes to $ST[l]$ in order to
fulfill the need of $ST[l]$. So basically we now extend $ST$ with
changed row number of $ST$, hence a recursive call to Algorithm
\ref{extendST} is made as \textsc{ExtendST}($ST,r,k,y,OldST$).

The value it returns is taken in $n[r]$ and added to $d[r]$, and it
checks whether $d[r]+n[r] \geq k$. If $\mathbf{TRUE}$ it adds the
$x[r]$ nodes to $ST[l]$ and removes then from $ST[r]$. Next, we check
$ST[l]$, if number of nodes in $ST[l]$ minus $x[l]$ is greater than
$k$ then $l$ is compared with $y$ to check whether it is the initial
call or a recursive call. If it is the initial call to algorithm, then
control comes to line $25$. Otherwise, Algorithm \ref{extendST}
returns the number of nodes added in $ST[l]$. But, if the condition at
line $20$ fails, then it continues the loop in line $3$ with other
adjacent nodes of node $u$. If the condition at line $18$ fails, then
$color[v]$ is set to $Gray$.

\incmargin{1em}
\restylealgo{boxruled}
\linesnumbered
\begin{algorithm}[]              \index{\textsc{ExtendST} Algorithm}      
\label{extendST}
\dontprintsemicolon
$z[l] \leftarrow |ST[l]|$ \;
\ForEach {$u \in ST[l]$}{
\ForEach {$v \in adj[u]$}{
\If{$v \notin ST[l]$ and $color[v] \neq Gray$}{
$stop \leftarrow \mathbf{FALSE}$ \;
\lIf{$v \in V_m$}{
continue  \;
}
$r \leftarrow$ \textsc{GetSTRow}($v$) \;
$x[r] \leftarrow$ \textsc{NodesToBeRemoved}($v$) \;
$d[r] \leftarrow |ST[r]|-|x[r]|$ \;
\If{$d[r] \geq k$}{
\textsc{AddRemoveNodes}($x[r]$, $ST[l]$, $ST[r]$) \;
$color[v] \leftarrow Gray$ \;
\If{$|ST[l]|-|x[l]| \geq k$}{
\lIf{$l==y$}{
Goto Line $25$ \;
}
\Return($|ST[l]|-z[l]$) \;
}
\lElse{
Continue \;
}
}
$n[r] \leftarrow$ \textsc{ExtendST}($ST, r, k, y, OldST$) \;
\If{$d[r]+n[r] \geq k$}{
\textsc{AddRemoveNodes}($x[r]$, $ST[l]$, $ST[r]$) \;
\If{$ST[l]-|x[l]| \geq k$}{
\lIf{$l==y$}{
Goto Line $25$ \;
}
\Return($|ST[l]|-z[l]$) \;
}
}
\lElse{$color[v] \leftarrow Gray$ \;
}

}
}
}
\lIf{$l \neq y$}{\Return($|ST[l]|-z[l]$) \;}
\If {$stop$ == $\mathbf{FALSE}$}{
\textsc{Store}($ST, B$) \;
$ST \leftarrow OldST$ \;
$stop \leftarrow \mathbf{TRUE}$ \;
Goto Line $1$ \;
}
$ST$ $\leftarrow$ \textsc{GetSTMinEccentricity}($B$) \;
$nodesAdded$ $\leftarrow$ \textsc{GetNodesAdded}($ST, y$) \;
\Return($nodesAdded$) \;
  
\caption{\textsc{ExtendST}($ST$, $l$, $k$, $y$, $OldST$)}
\end{algorithm}
\decmargin{1em}


If all the nodes in $ST[l]$ have been checked and $ST[l]$ minus $x[l]$
is still less than $k$, then at line $24$, $l$ is compared with $y$ to
check whether it is the initial call or a recursive call. If it is a
recursive call, the number of nodes added to the $l^{th}$ row of $ST$
is returned. If it is a initial call to algorithm, then control comes
to line $25$.

At line $25$, if $stop$ is $\mathbf{FALSE}$, then function Store($ST,
B$) stores the current $ST$ in $B$, current $ST$ is replaced with
$OldST$ and $stop$ is set to $\mathbf{FALSE}$. The algorithm starts
with the initial call parameters to Algorithm \ref{extendST} and this
call generates a new $ST$. In this way we store all the possible $ST$s
in $B$. When no neighbor is there to get nodes, the $ST$ which
fulfills the required nodes with minimum possible increase of
eccentricity is given as final output of the algorithm. This final
$ST$ and number of nodes added in row $y$ are returned.

\subsubsection{Algorithm $4$: Pruning the $ST$ Matrix Rows to Get a Central Forest} 

McMorris and Reid~\cite{McMorrisReid1997} have given a pruning
algorithm for constructing central-$k$-tree in a given tree. We have
to find $m$ central-$k$-subtrees for central forest $C(T;m,k)$. We
have already divided the nodes into $m$ subtrees in $ST$. If we apply
the central-$k$-tree algorithm to each subtree, we can get the
central-$k$-subtree for each subtree.

The \textsc{PruneST} algorithm (Algorithm \ref{pruneST}) prunes
(removes) all the end-nodes of the tree and their corresponding edges
in each step. After pruning, if the nodes in the resulting tree $T'$
are less than or equal to $k$, then a subset of nodes from the nodes
that are pruned in the last step is added to the nodes left in the
tree $T'$ and algorithm exits. The cardinality of this subset is
$k-V(T')$. Otherwise, if the number of nodes in the resulting tree is
more than $k$, then prune all the end-nodes again. And this repeats
for some definite number of steps.

\incmargin{1em}
\restylealgo{boxruled}
\linesnumbered
\begin{algorithm}[]               \index{\textsc{PruneST} Algorithm}      
\label{pruneST}
\dontprintsemicolon
\ForEach{row $r$ of $ST$}{
$C \leftarrow ST[r]$ \;
$C(0) \leftarrow C$ \;
$L(0) \leftarrow$ \textsc{GetEndNodes}($C$) \;
$i \leftarrow 1$ \;
\While{$\mathbf{TRUE}$}{
$C(i) \leftarrow C(i-1)/L(i-1)$ \;
\If{$|V(C(i))| \leq k$}{
$X \leftarrow$ \textsc{GetSubset}($L(i-1)$,$(k-|V(C(i))|)$)  \;
$U[r] \leftarrow V(C(i)) \cup X$ \;
break \;
}
$L(i) \leftarrow$ \textsc{GetEndNodes}($C$) \;
$i \leftarrow i+1$ \;
}
}
\Return ($U$)
\caption{\textsc{PruneST}($ST$, $k$)}
\end{algorithm}
\decmargin{1em}


The Algorithm~\ref{pruneST} is taken from McMorris and Reid
~\cite{McMorrisReid1997} with some slight modifications. In Algorithm
\ref{pruneST}, every time line $2$ sets $C$ as the one of the
subtrees(rows) of $ST$. $L(0)$ gets the end nodes of $C$. As in
McMorris and Reid ~\cite{McMorrisReid1997}, for a subset $L$ of the
nodes of $C$, let $C/L$ denote the sub forest with node set $V(C)/L$
and edge set containing of all edges of $C$ incident with no node in
$L$ i.e now $C/L$ contains all the nodes that are in $V(C)$ but not in
$L$.

At line $8$, if the `if' condition evaluates to $\mathbf{TRUE}$, $X$
takes any subset of $L(i-1)$ of order $k-|V(C(i)|$. The $U[r]$ ($U$
for $ST$ row $r$) is set to $V(C(i))$ $\cup$ $X$ and control comes out
of infinite `while' loop. Then, the `foreach' loop of line $1$ starts
with next row in $ST$.

But if the `if' condition at line $8$ fails, then it continues in the
`while' loop. When all rows of $ST$ are done, the Algorithm
\ref{pruneST} returns the matrix $U$.

\begin{remark}

  The forest \(C(T; m, k)\) does not necessarily have unique subtrees,
  i.e. a $k$ node sub tree can be built by choosing a different set of
  nodes.

\end{remark}

The justification for this remark is that central-$k$-tree of a tree
is not unique. As shown in Section \ref{centralktree}, we may get a
different central-$k$-subtree using the same algorithm. Therefore,
subtrees of $C(T;m,k)$ are also not unique.

\subsubsection{Algorithm $5$: The Main Algorithm for Building a Central Forest}

The \textsc{CF} algorithm (Algorithm \ref{algoCF}) builds a central
forest. The matrix $V_m$, tree $T$ and order of each subtree of
central forest, $k$ are given as input to the algorithm. The value of
$k$ should be less than or equal to the upper bound on $k$ as given by
Remark ~\ref{remark1}. The output of this algorithm is the central
forest with $m$ subtrees of order $k$ each. If central forest of order
$k$ is not possible, then the algorithm outputs the maximum value of
$k$ for which a central forest is possible.

\incmargin{1em}
\restylealgo{boxruled}
\linesnumbered
\begin{algorithm}[]          \index{\textsc{CF} Algorithm}      
\label{algoCF}
\dontprintsemicolon
$ST$ $\leftarrow$ \textsc{BuildST}($T$,$Vset_m[i]$)  \;
$minNodes \leftarrow$  \textsc{ExtractMinNodes}(ST) \;
\While{$k>minNodes$}{
\ForEach{$u \in V(T)$}{
$color[u] \leftarrow White$ \;}
$l \leftarrow$ \textsc{GetMinSTRow}() \;
$OldST \leftarrow ST$ \; 
$a \leftarrow$ \textsc{ExtendST}($ST, l, k, l, OldST$) \;
\If{$a == 0$}{
$ST \leftarrow OldST$ \;
Output $minNodes$  \;
Exit \;
}
$minNodes \leftarrow$  \textsc{ExtractMinNodes}(ST) \;
}
$A \leftarrow$ \textsc{PruneST}($ST$, $k$) \; 
\caption{\textsc{CF}($T$, $V_m$, $k$)}
\end{algorithm}
\decmargin{1em}

Line $1$ calls the Algorithm \ref{ST} to build the matrix $ST$.

The $minNodes$ variable takes the minimum number of nodes in any row
of matrix $ST$. At line $3$, the value of $minNodes$ is compared with
the value of $k$. In the first case, when $k$ is less than or equal to
$minNodes$, the condition at line $3$ fails and Algorithm
\ref{pruneST} is called with parameters $ST$ and $k$ which returns the
central forest.

But in the second case, when $k$ is greater than the $minNodes$, we
need to extend that subtree (row) of $ST$, such that the number of
nodes becomes equal or greater than the $k$. Also, there may be more
than one subtrees (rows) that have nodes less than $k$, so at line $3$
`while' loop repeats lines $4$--$13$ till any of the subtree (row) has
less number of nodes than $k$.

Line $5$ sets the $color$ of each node in $V(T)$ as `$White$'. At line
$8$, a call is made to Algorithm \ref{extendST}. The Algorithm
\ref{extendST} returns the number of nodes added to $l^{th}$ row of
$ST$ and the new $ST$.

If the value of $a$ is $0$, no node can be added to $l^{th}$ row, thus
the central forest for this value of $k$ is not possible. Line $10$
replaces new $ST$ with the $OldST$, i.e. $ST$ before the call to
Algorithm \ref{extendST}. The value of $minNodes$ which is the maximum
value for which the central forest is possible, is given as output and
the Algorithm \ref{algoCF} exits.  As mentioned in Section
\ref{upbound}, this value is the required upper bound on the value of
$k$ for tree $T$.

But if at line $8$, the value returned in $a$ is non-zero, then at
line $13$ the function \textsc{ExtractMinNodes}(ST) extracts the
minimum number of nodes among all the rows of new $ST$. The `while'
loop of line $3$ repeats till the value of $minNodes$ becomes equal to
or less than $k$. When the condition at line $3$ fails, Algorithm
\ref{pruneST} is called that returns a central forest in $A$ matrix.

\subsection{An Example Demonstrating the \textsc{CF} Algorithm}

We take a short example as shown in Figure \ref{ExTree} to demonstrate
the working of Algorithm \ref{algoCF}. In this example, we are
constructing central forests stated as follows:

\begin{figure}[htp]                \index{example tree}
\begin{center}
\includegraphics [scale=.5, angle = -90] {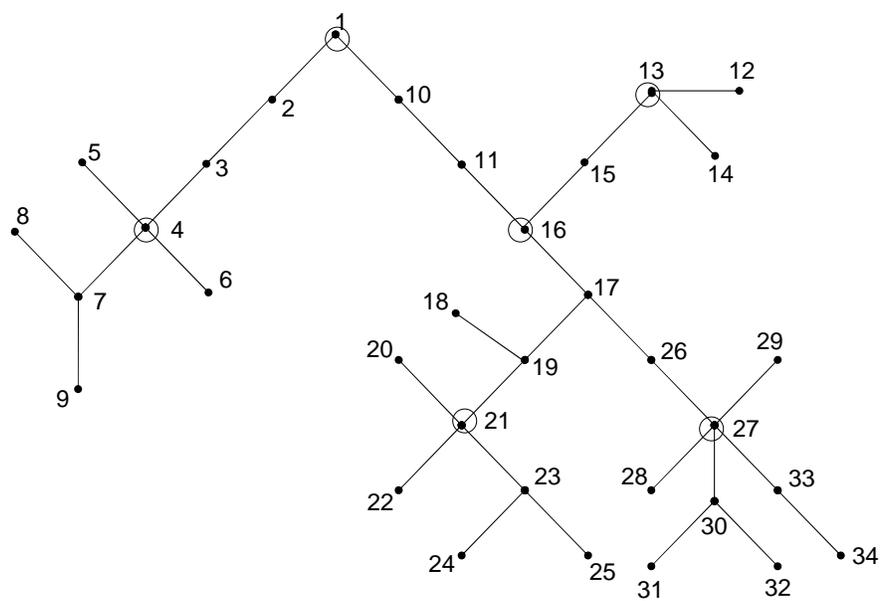}
\caption{Example tree $T$}
\label{ExTree}
\end{center}
\end{figure} 

\begin{itemize}
\item [1.] $C(T;6,2)$ and  
\item [2.] $C(T;6,4)$
\end{itemize}

We are given\\
$V(T) = \{1,2,3,4,5,6,7,8,9,10,11,12,13,14,15,16,17,18,19,20,21,22,$ \\
$23,24,25,26,27,28,29,30,31,32,33,34\}$ \\
$V_6 = \{1,4,13,16,21,27\}$ \\

\begin{itemize}
\item [1.] For $C(T;6,2)$ \\  

  Algorithm~\ref{algoCF} calls Algorithm~\ref{ST} to build the $ST$
  matrix. In Algorithm~\ref{ST}, $H$ =
  $\{2,3,5,6,7,8,9,10,11,12,14,15,17,18,19,20,22,23,24,25,$\\
  $26,28,29,30,31,$ $32,33,34\}$. Except node $15$, all the nodes have
  a unique nearest node in $V_6$. So all these nodes are directly
  added to the corresponding rows in $ST$ matrix and the $ST$ matrix
  is as shown in Table~\ref{ST1}.

  At the end we check the array $u$ for non-zero values. Only the $u$
  array of node $15$ holds the index $3$ and $4$. The number of nodes
  in row $3$ is equal to the number of nodes in row $4$, so node $15$
  is added to the row with lower index value in $V_6$, i.e. row
  $3$. The final $ST$ matrix, as depicted in Table~\ref{ST2}, is
  returned to the Algorithm~\ref{algoCF}.

\begin{table}[htp]                
\centering
\begin{tabular}{|l|l|l|l|l|l|l|l|l|l|l} 
\hline
\backslashbox{$row$}{$column$}&1&2&3&4&5&6&7&8&9\\
\hline
1&1&2&10&&&&&&\\
\hline
2&4&3&5&6&7&8&9&&\\
\hline
3&13&12&14&&&&&&\\
\hline
4&16&11&17&&&&&&\\
\hline
5&21&18&19&20&22&23&24&25&\\
\hline
6&27&26&28&29&30&31&32&33&34\\
\hline
\end{tabular} 
\caption{Intermediate $ST$ matrix}
\label{ST1}
\end{table}

\begin{table}[htp]                 
\centering
\begin{tabular}{|l|l|l|l|l|l|l|l|l|l|l} 
\hline
\backslashbox{$row$}{$column$}&1&2&3&4&5&6&7&8&9\\
\hline
1&1&2&10&&&&&&\\
\hline
2&4&3&5&6&7&8&9&&\\
\hline
3&13&12&14&15&&&&&\\
\hline
4&16&11&17&&&&&&\\
\hline
5&21&18&19&20&22&23&24&25&\\
\hline
6&27&26&28&29&30&31&32&33&34\\
\hline
\end{tabular} 
\caption{$ST$ matrix}
\label{ST2}
\end{table}

Line $2$ of Algorithm~\ref{algoCF} sets $minNodes = 3$. The condition
at line $3$ fails as $k=2$ here. So at line $14$
Algorithm~\ref{pruneST} is called.

In Algorithm~\ref{pruneST}, for the first time let $r$ be $1$ and $C$
gets $\{1,2,10\}$,$C(0) = \{1,2,10\}$ $L(0)= \{2,10\},i=1$. The
condition at line $6$ is always true, so line $7$ sets $C(1) =
C(0)/L(0) = \{1\}$. At line $8$, $|V(C(1)| = 1$, is less than $k$ so
$U[1] = \{1\} \cup X$, where $X$ can be $\{2\}$ or $\{10\}$. Let $X=
\{2\}$, so $U[1] = \{1,2\}$. Similarly $U$ is calculated for every row
of $ST$. Then the matrix $U$ is returned to Algorithm~\ref{algoCF}.

At line $14$ of Algorithm~\ref{algoCF}, when the call to
Algorithm~\ref{pruneST} returns, $A$ holds the returned central
forest.

The central forest is shown in Figure~\ref{CF62ExTree}. Here ovals are
used to represent the nodes in each subtree of central forest. There
are total $6$ ovals, each of which has $2$ nodes, thus we get
$C(T;6,2)$.

\begin{figure}[htp]                   \index{$C(T;6,2)$}
\begin{center}
\includegraphics [scale=.5, angle = -90] {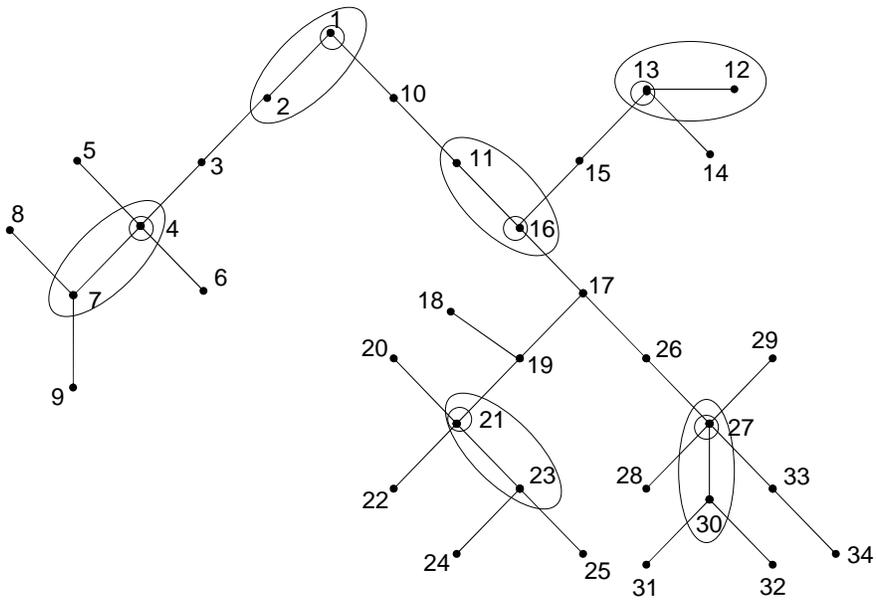}
\caption{A central forest $C(T;6,2)$ of example tree $T$}
\label{CF62ExTree}
\end{center}
\end{figure}

\item [2.] For $C(T;6,4)$ \\

  Line $1$ of Algorithm \ref{algoCF} builds $ST$ matrix in the same
  way as in case $1$.

  Line $2$ of Algorithm \ref{algoCF} sets $minNodes = 3$. This time
  the condition at line $3$ is true. Lines $4$--$5$ set the $color$ of
  each node of $V(T)$ as `$White$'. Lines $6$ and $7$ set $l = 1$ and
  $OldST = ST$. At line $8$, \textsc{ExtendST}($ST,1,4,1,OldST$)
  Algorithm \ref{extendST} is called.

  In Algorithm \ref{extendST}, $z[1] = 3$ as $ST[1] = \{1,2,10\}$. The
  `for' loop in line $2$ starts with node $1$, `for' loop in line $3$
  checks nodes $2$ and $10$ (adjacent of $1$) but `if' condition fails
  at line $4$. So now `for' loop at line $2$ starts with next node in
  $ST[l]$ which is node $2$. Its adjacent node $3$ does not belong to
  $ST[1]$ and $color[3]$ is not `$Gray$', so $stop$ is set to
  $\mathbf{FALSE}$. Also node $3$ does not belong to $V_6$ so lines
  $7-9$ set $r = 2, x[2] = \{3\}, d[2]= 6$.

  The `if' condition at line $10$ is true and node $3$ is added to
  $ST[1]$ and removed from $ST[2]$. The variable $color[3]$ is set to
  `$Gray$'. Condition at line $13$ is also true, so control comes to
  line $25$. Condition at line $25$ is $\mathbf{TRUE}$, so current
  $ST$ is stored in matrix $B$, current $ST$ is replaced with $OldST$,
  $stop$ is set to $\mathbf{TRUE}$ and control is transfered to line
  $1$.

  At line $1$ of Algorithm~\ref{extendST}, $z[1] = 3$ again as we have
  $ST[1] = \{1,2,10\}$. Proceeding in the same way, when the `for'
  loop at line $2$ starts with node $2$. Its adjacent node $3$ does
  not belong to $ST[1]$ but $color[3]$ is `$Gray$', so `for' loop of
  line $2$ starts with next node in $ST[1]$, which is node $10$. Its
  adjacent node $11$ does not belong to $ST[1]$ and $color[11]$ is not
  `$Gray$', so $stop$ is set to $\mathbf{FALSE}$. Also node $11$ does
  not belong to $V_6$ so lines $7-9$ set $r = 4, x[4] = \{11\}, d[4]=
  2$.
   
  Now condition at line $10$ fails and control comes to line $17$
  where a recursive call is made to Algorithm~\ref{extendST} as
  \textsc{ExtendST}($ST,4,4,1,OldST$). This call starts the algorithm
  with $l=4$ and set $z[4] = 3$. The `for' loop in line $2$ checks all
  the nodes in $ST[4]$ one by one and for node $16$, `if' condition of
  line $4$ evaluates $\mathbf{TRUE}$ as its adjacent node $15$ is
  neither in $ST[4]$ and nor `$Gray$'.

  So, $stop$ is set to $\mathbf{FALSE}$. Node $15$ also does not
  belong to $V_6$ so lines $7-9$ set $r = 3, x[3] = \{15\}, d[3] = 3$.

  Now condition at line $10$ fails and control comes to line $17$
  where a recursive call is made to Algorithm~\ref{extendST} as
  \textsc{ExtendST}($ST,3,4,1,OldST$). This call returns $0$ as
  $ST[3]$ does not have enough nodes to give and also it does not have
  any other neighbor.

  The condition at line $18$ fails as $n[3]$ is $0$ and $d[3]$ = $3$,
  so in else part $color[15]$ is set to `$Gray$'.

  The `foreach' loop of line $3$ continues with other adjacent nodes
  of node $16$.  There is no such node. So loop at line $2$ continues
  with next node in $ST[4]$, which is node $17$.

  Node $19$ is adjacent to node $17$ and it satisfy condition at line
  $4$ so $stop$ is set to $\mathbf{FALSE}$. The condition at line $6$
  fails so lines $7$--$9$ set $r = 5, x[5] = \{18,19\}, d[5] = 6$

  Now condition at line $10$ is true and nodes $18$ and $19$ are added
  to $ST[4]$ and removed from $ST[5]$. The variable $color[19]$ is set
  to `$Gray$'. Condition at line $13$ is $\mathbf{TRUE}$ but here $l$
  is 4 which is not equal to $y$, so nodes added in $ST[4]$ is
  returned and $n[4]$ gets $2$.

\begin{figure}[htp]                 \index{$C(T;6,4)$}
\begin{center}
\includegraphics [scale=.5, angle = -90] {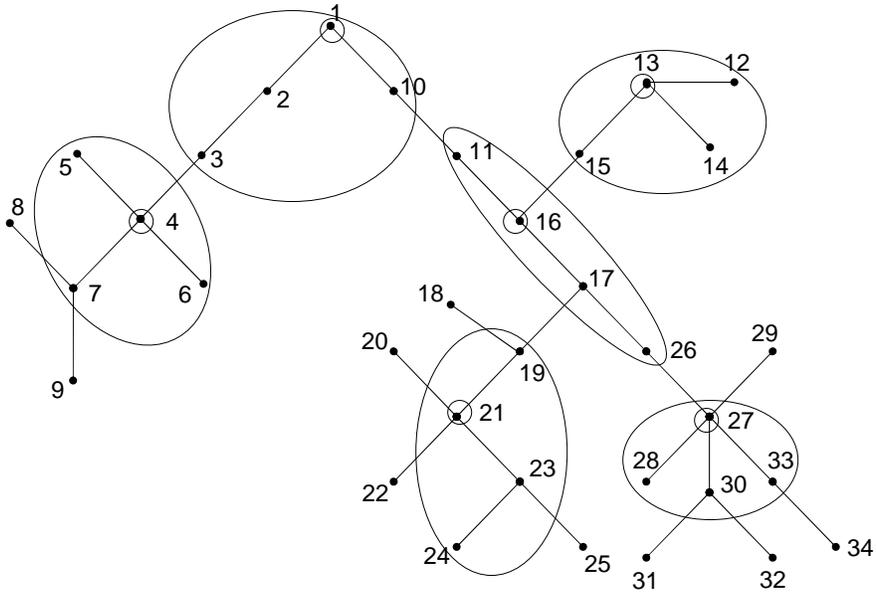}
\caption{A central forest $C(T;6,4)$ of example tree $T$}
\label{CF64ExTree}
\end{center}
\end{figure} 

At line $18$, $n[4]+d[4]$ = 4 is equal $k$ so node $11$ is added to
$ST[1]$ and removed from $ST[4]$ and as $l = y$ the control is
transferred to line $25$. The lines $26$--$29$, store current $ST$ in
matrix $B$, replace current $ST$ with $OldST$, set $stop$ to
$\mathbf{TRUE}$ and transfer to line $1$.

Proceeding in the same way, we get $z[1]$ = $3$, for $u = 10$ and $v =
11$, $stop$ is set to $\mathbf{FALSE}$ and $r = 4, x[4] = \{11\}, d[4]
= 2$. As $d[4]$ is less than $k$ so a recursive call is made to
Algorithm \ref{extendST} as \textsc{ExtendST}($ST,4,4,1,OldST$).

This call starts the algorithm with $l=4$ and set $z[4] = 3$. For $u$
= $17$ and $v = 26$, conditions at lines $4$ and $6$ evaluate
$\mathbf{TRUE}$, so $stop$ is set to $\mathbf{FALSE}$ and $r = 6, x[6]
= \{26\}, d[6] = 8$.

Now condition at line $10$ is true and node $26$ is added to $ST[4]$
and removed from $ST[6]$. $color[26]$ is set to `$Gray$'. Condition at
line $13$ fails, the `foreach' loop in line $3$ continues with other
adjacent nodes of node $17$. But as there is no such node, the
`foreach' loop of $2$ checks next node in ST[4], but conditions at
line $4$ and $6$ evaluate $\mathbf{FALSE}$ for every node. At line
$24$, `if' condition is true as $l = 4$, so $color[11]$ is set to
`$Gray$'. The lines $26$--$29$, store current $ST$ in matrix $B$,
replace current $ST$ with $OldST$, set $stop$ to $\mathbf{TRUE}$ and
transfer to line $1$.

Here, $z[1]$ = $3$, condition at line $4$ evaluates $\mathbf{FALSE}$
for every node in $ST[1]$. At line $25$, `if' condition fails as
$stop$ is $\mathbf{TRUE}$. At line $30$, $ST$ gets the $ST$ that
fulfills the requirement with minimum increase of eccentricity among
all the $ST$s stored in $B$. If no $ST$ stored in $B$ that fulfills
the requirement, then $ST$ that adds maximum number of nodes is
returned. In our case, we choose $ST$ that adds node $3$ in $ST[1]$
from $ST[2]$. The $nodesAdded$ variable gets $1$ as the nodes added in
row $1$ of $ST$ and returns to the main Algorithm~\ref{algoCF}.

In Algorithm~\ref{algoCF}, condition at line $9$ fails, as $1$ node is
added to $ST[l]$ and line $13$ sets $minNodes = 3$. The `while' loop
at line $3$ evaluates $\mathbf{TRUE}$. Lines $4$--$5$ set the $color$
of each node of $V(T)$ as `$White$'. Line $11$ calls
\textsc{ExtendST}($ST,4,4,4,OldST$). Proceeding in the same way as
before the call to Algorithm~\ref{extendST} returns new $ST$ with node
$26$ added to $ST[4]$ and removed from $ST[6]$.

\begin{table}[htp]                 
\centering
\begin{tabular}{|l|l|l|l|l|l|l|l|l|l|l} 
\hline
\backslashbox{$row$}{$column$}&1&2&3&4&5&6&7&8&9\\
\hline
1&1&2&10&3&&&&&\\
\hline
2&4&3&5&6&7&8&9&&\\
\hline
3&13&12&14&15&&&&&\\
\hline
4&16&11&17&26&&&&&\\
\hline
5&21&18&19&20&22&23&24&25&\\
\hline
6&27&26&28&29&30&31&32&33&34\\
\hline
\end{tabular} 
\caption{Final $ST$ matrix}
\label{ST3}
\end{table}

The condition at line $9$ fails again and $minNodes$ gets $4$ which is
equal to $k$, so condition at line $3$ fails. The final $ST$ matrix we
got is shown in Figure~\ref{ST3}.

Then, control comes to line $14$ where Algorithm~\ref{pruneST} is
called. The Algorithm~\ref{pruneST} works in the same way as explained
for case $1$ and we get central forest in $A$.

The central forest so obtained is shown in
Figure~\ref{CF64ExTree}. Here ovals are used to represent the nodes in
each subtree of central forest. There are total $6$ ovals, each of
which has $4$ nodes, thus we get $C(T;6,4)$.
\end{itemize}

\subsection{Performance Analysis of the \textsc{CF} Algorithm}

The running time of the \textsc{CF} algorithm depends upon the $ST$
returned by the Algorithm \ref{ST}. If the nodes are assigned almost
equally to all the nodes of the $m$-center then the
Algorithm~\ref{algoCF} runs very fast, but if the assignment of nodes
is unbalanced, then it may take a long time depending on how large a
value of $k$ is required. The time taken by Algorithm~\ref{algoCF} is
the sum of the running time of Algorithm~\ref{ST},
Algorithm~\ref{extendST}, Algorithm \ref{pruneST} and the time taken
by itself. In this section, we investigate how the
Algorithm~\ref{algoCF} performs. In any case, the Algorithm~\ref{ST}
costs $O(n(m+k))$ time, where $n$ is the number of nodes in the tree
$T$. Algorithm~\ref{extendST} visit each node maximum of one or two
times, so this takes $O(n)$ time. Algorithm~\ref{pruneST} also takes
$O(n)$ time.  In Algorithm~\ref{algoCF} other statements takes
constant amount of time.


\begin{itemize}
\item \noindent{\bf Best Case Analysis} \\
\index{performance analysis!best case}

The best case occurs when the required value of $k$ is less than or
equal to the minimum number of nodes in any subtree of $ST$. This case
mostly occurs when the $ST$ returns the subtree of almost the same
order, i.e. the nodes are equally assigned to all the nodes of the
$m$-center.  Then, Algorithm~\ref{extendST} need not be called even
once. So the running time of Algorithm~\ref{algoCF} is as follows:

$T(n)$ = Cost of Algorithm~\ref{ST} + Cost of Algorithm~\ref{pruneST}
+ Extra Cost of Algorithm~\ref{algoCF}

$T(n)$ = $O(n(m+k))$ + $O(n)$ + $c$ \\
$T(n)$ = $O(n(m+k))$
 
\item \noindent{\bf Worst Case Analysis} \\
\index{performance analysis!worst case}

Worst case occurs when $m-1$ subtrees of $ST$ returned by
Algorithm~\ref{ST} have less number of nodes in their subtrees than
value of $k$. This can happen when one of the subtrees, which is in
the center, has most of the nodes and all others are connected to the
one in the center. The Algorithm~\ref{extendST} can be called once for
all the $m-1$ subtrees and if for some row, number of nodes added is
less than required, then Algorithm~\ref{extendST} is called once again
for that row and this time Algorithm~\ref{extendST} returns $0$, so
Algorithm~\ref{algoCF} exits. Thus, the Algorithm~\ref{extendST} can
be called maximum of $m$ times. Each of the call to
Algorithm~\ref{extendST} costs $O(n)$. Therefore, $T(n)$ = Cost of
Algorithm~\ref{ST} + $m$ $\times$ Cost of Algorithm~\ref{extendST} +
Cost of Algorithm~\ref{pruneST} + Extra Cost of Algorithm~\ref{algoCF}

$T(n)$ = $O(n(m+k))$ + $O(mn)$ + $O(n)$ + $c$ \\
$T(n)$ = $O(n(m+k))$

\end{itemize}

As we see, the worst-case behavior of the proposed algorithm is the same
as its best case.

\subsection{Proof of Correctness of the \textsc{CF} Algorithm}

\begin{remark} \label{proofST}    \index{proof of correctness!\textsc{BuildST} algorithm}

  Algorithm~\ref{ST} builds an $ST$ matrix where each node is assigned
  to its nearest $m$-center node.

\end{remark}


  As shown in Minieka~\cite{Edward1970}, an $m$-center set $X$ of a
  graph is any set of $m$ nodes, that minimizes the maximum distance
  from a node to its nearest node in $m$-center. In other words, the
  eccentricity of set $X$ is less than or equal to any possible set of
  cardinality $m$ of a graph. If such a set $X$ is given, each node
  can be arbitrarily assigned to its nearest node from the $m$-center.
  Thus all the nodes have minimum distance from the $m$-center.  Such
  a set $X$ is given as input to Algorithm~\ref{ST} and the algorithm
  assigns the nodes to their nearest nodes in $m$-center.


\begin{lemma} \label{proofextendST}
\index{proof of correctness!\textsc{ExtendST} algorithm}

The re-arrangement of nodes in $ST$ rows by Algorithm~\ref{extendST}
gives the rows in the new $ST$ matrix to minimize the distance from
the $m$-center.


\end{lemma}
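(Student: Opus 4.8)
The plan is to separate the claim into a \emph{feasibility} part and an \emph{optimality} part. For feasibility I would maintain the invariant that, at every point of Algorithm~\ref{extendST}, each row $ST[r]$ induces a subtree of $T$ containing the $m$-center node $V_m[r]$; by Remark~\ref{proofST} this holds on entry. The crucial structural observation is that for a bridge node $v \in ST[r]$ adjacent in $T$ to some node of $ST[l]$, the only way to move $v$ into row $l$ while keeping both rows connected is to move exactly \textsc{NodesToBeRemoved}$(v)$, namely the nodes of $ST[r]$ separated from $V_m[r]$ by $v$. Since $ST[r]$ is a subtree through $V_m[r]$, deleting this set leaves a subtree still through $V_m[r]$, and attaching it to $ST[l]$ across the bridging edge yields a subtree still through $V_m[l]$; hence \textsc{AddRemoveNodes} and the direct transfer guarded by line~10 preserve the invariant. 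I would then push the invariant through the recursive call on line~17 by induction on recursion depth, which terminates because colouring each handled bridge node $Gray$ forbids re-entering a row already exhausted, so the depth is at most $m$.

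For optimality I would argue that any configuration in which $ST[l]$ reaches $k$ nodes while all rows remain subtrees through their centers is obtained as a finite composition of the block transfers described above along bridge edges, and that the control flow of Algorithm~\ref{extendST} enumerates every such composition: it picks a first admissible bridge node, performs (possibly recursively) the transfer, records the resulting $ST$ with \textsc{Store}, then --- because $stop$ is reset to $\mathbf{TRUE}$ and $ST$ to $OldST$ on line~25 --- restarts at line~1 with the next admissible first choice. Since the transferred nodes were at minimum distance from the $m$-center before being moved (Remark~\ref{proofST}), each transfer can only increase the eccentricity, so \textsc{GetSTMinEccentricity}$(B)$ returns a configuration realizing the least possible increase; when no stored configuration brings row $l$ up to $k$ nodes, the one adding the most nodes is returned with count $0$, which is exactly the signal Algorithm~\ref{algoCF} uses to report the maximal feasible $k$. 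Combining the two parts gives the lemma: the new $ST$ keeps every node as close to the $m$-center as the order-$k$ requirement on each subtree allows.

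The step I expect to be the main obstacle is the enumeration claim in the optimality part. One must verify that the restart-with-$OldST$ loop together with the recursion really reaches every feasible block-transfer sequence --- in particular configurations in which row $l$ must borrow from two or more distinct neighbours, and configurations in which a neighbour must itself be extended more than once --- and that no feasible target is lost because an early $Gray$ marking pruned a branch that a different ordering of bridge choices would have kept alive. I would make this precise by a double induction on the number of rows with fewer than $k$ nodes and on the total deficit $\sum_{r}\max\bigl(k-|ST[r]|,\,0\bigr)$, using the fact that Algorithm~\ref{algoCF} re-whitens every node (lines~4--5) between successive top-level calls to Algorithm~\ref{extendST}, so the colour state never carries a spurious restriction from one extension into the next.
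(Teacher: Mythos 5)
Your proposal follows the same essential route as the paper's own proof, but is substantially more careful, and the step you flag as the main obstacle is precisely where the published argument is weakest. The paper's proof is only a few sentences: it observes that Algorithm~\ref{ST} already places every node at minimum distance from the $m$-center, that reassigning a node to a different row can only preserve or increase that distance, and then simply \emph{asserts} that the algorithm ``tries all the possible node(s) from neighbors that can be added to row $r$'' and, via \textsc{GetSTMinEccentricity}, selects the candidate with the smallest increase in eccentricity. That assertion is exactly the enumeration claim you isolate, and the paper offers no justification for it --- there is no analysis of the $Gray$ marking, of the restart-with-$OldST$ loop, or of configurations in which a row must borrow from two or more distinct neighbours or a neighbour must itself be extended more than once. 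Your feasibility half (the invariant that each row remains a subtree of $T$ containing its $m$-center node, preserved because \textsc{NodesToBeRemoved}$(v)$ is a whole branch hanging off the bridge node $v$) is entirely absent from the paper, even though something of that kind is needed for Lemma~\ref{proofpruneST} to apply row by row; adding it is a genuine improvement. So where your plan is worked out it strengthens the published argument, and where it is not --- the claim that the restart loop together with the recursion reaches every feasible block-transfer sequence without an early $Gray$ marking pruning a configuration that a different ordering would have found --- you have identified a real gap that the paper leaves open as well. Closing that gap (by the double induction you sketch, if it can be pushed through) or exhibiting a counterexample to exhaustive enumeration would go beyond what the paper itself establishes.
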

\begin{proof}

  All the nodes in the $ST$ matrix are so arranged by
  Algorithm~\ref{ST} that their distances from the $m$-center are
  minimized.  If we try to rearrange the nodes among the neighbors,
  the distances of the nodes may remain same (if a node is at equal
  distance from both the nodes of the $m$-center) or it may increase.
  We add nodes to a row $r$ only when the number of nodes in row $r$
  is less than the value of $k$.  The algorithm tries all the possible
  node(s) from neighbors that can be added to row $r$ and adds the
  node(s) which gives minimum increase of eccentricity for the
  $m$-center.  So the overall eccentricity remains as small as
  possible.  Obviously, there may be some increase in overall
  eccentricity but this cannot be avoided as we want number of nodes
  in row $r$ to be greater than or equal to $k$.  Thus,
  Algorithm~\ref{extendST} gives the rows in new $ST$ with minimum
  increase in eccentricity. \qedhere

\end{proof}

\begin{lemma}  \label{proofpruneST} 
\index{proof of correctness!\textsc{PruneST} algorithm}

Algorithm~\ref{pruneST} gives the $m$ central-$k$-subtrees for $m$
subtrees in $ST$.

\end{lemma}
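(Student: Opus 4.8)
The plan is to show that running the pruning procedure of McMorris and Reid (Algorithm~\ref{centktree}) independently on each row $ST[r]$ produces, for that row, exactly a central-$k$-subtree of the subtree $T[ST[r]]$ induced by that row. First I would observe that Algorithm~\ref{pruneST} is, row by row, a faithful reimplementation of the \textsc{Central-$k$-tree} algorithm: line~2 sets $C \leftarrow ST[r]$ (equivalently, the induced subtree on that row), line~4 initializes $L(0)$ with the end-nodes of $C$, lines~7--14 perform the same repeated leaf-stripping $C(i) \leftarrow C(i-1)/L(i-1)$ until $|V(C(i))| \le k$, and then line~9--10 pad $V(C(i))$ with a $(k - |V(C(i))|)$-subset $X$ of the last pruned layer $L(i-1)$ to form $U[r]$. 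Since this is verbatim the McMorris--Reid construction applied to the tree $T[ST[r]]$, the correctness of their algorithm (cited as Algorithm~\ref{centktree} and Definition~\ref{cktDefi}) immediately gives that $U[r]$ is a central-$k$-tree of $T[ST[r]]$, for each $r$.

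Next I would address the one genuine gap: the loop of Algorithm~\ref{pruneST} is entered only after the \textsc{CF} algorithm has guaranteed (via the \texttt{while} loop at line~3 of Algorithm~\ref{algoCF} and the calls to \textsc{ExtendST}) that \emph{every} row of the current $ST$ has at least $k$ nodes. So I would note as a precondition that $|V(C(0))| = |ST[r]| \ge k$ for all $r$; this ensures the leaf-stripping terminates at a stage $i$ with $|V(C(i))| \le k$ while $|V(C(i-1))| > k$, so that the padding set $X \subseteq L(i-1)$ of the required cardinality exists, and that $T[U[r]]$ is connected (the restriction of $X$ to the last pruned layer $L(i-1)$ is precisely what guarantees the result is a subtree, as remarked after Algorithm~\ref{centktree}). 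Together with Remark~\ref{proofST} (each node sits in the row of its nearest $m$-center node) and Lemma~\ref{proofextendST} (the rearrangement by \textsc{ExtendST} keeps distances from the $m$-center minimal), the $m$ rows of $ST$ are a valid partition of $V(T)$ into $m$ subtrees, so applying the pruning to each yields $m$ vertex-disjoint central-$k$-subtrees, i.e.\ a forest of the required shape.

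The step I expect to be the main obstacle is justifying that minimizing the eccentricity of each row's subtree \emph{separately} is the right thing to do for the global objective $e_m(F)$ of Definition~\ref{CFDefi} — that is, that a forest assembled from per-row central-$k$-subtrees actually attains $\min e_m(F')$ over all of $\mathbb{F}_{T,m}$. The argument I would give is that once the node-to-center assignment is fixed optimally (Remark~\ref{proofST}, Theorem~\ref{mcenterpart}), the distance from any node $x \in T - F$ to $F$ is realized within the block of $ST$ containing $x$, so $e_m(F) = \max_r e(U[r])$ taken in $T[ST[r]]$; hence minimizing each $e(U[r])$ independently minimizes the max, and the McMorris--Reid guarantee does exactly that for each block. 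I would flag that a fully rigorous treatment of this "separability" claim really belongs to the overall correctness theorem for \textsc{CF} rather than to this lemma, so here I would keep the statement at the level of "Algorithm~\ref{pruneST} gives the $m$ central-$k$-subtrees for the $m$ subtrees in $ST$" and prove precisely that, citing McMorris and Reid~\cite{McMorrisReid1997} for the single-subtree case and deferring the assembly argument.
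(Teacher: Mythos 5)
Your proposal takes essentially the same route as the paper's own proof: view each row of $ST$ as a subtree, apply the McMorris--Reid pruning to it, and invoke their correctness result to conclude that each $U[r]$ is a central-$k$-subtree. Your additional observations --- that every row must already have at least $k$ nodes before \textsc{PruneST} is entered, and that the per-row/global separability question belongs to the overall correctness theorem rather than to this lemma --- are sound refinements that the paper's (very terse) proof leaves implicit, but they do not change the underlying argument.
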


\begin{proof}

  The nodes of $T$ are arranged in $ST$, each row of $ST$ is viewed as
  a separate subtree. McMorris and Reid~\cite{McMorrisReid1997} have
  given a proof that their algorithm outputs the central $k$-tree for
  any given tree.  If we apply their algorithm for a subtree of $T$
  then it gives us a central-$k$-subtree.  In our
  Algorithm~\ref{pruneST}, we are using McMorris's and Reid's
  algorithm for each row of $ST$. So at the end we get $m$
  central-$k$-subtrees for rows in $ST$. \qedhere

\end{proof}



\begin{lemma}        \label{proofmcenter}

  The $C(T;m,k)$ we get by building subtrees on $m$-center of tree $T$
  is the same as a $C(T;m,k)$ we get by building subtrees on the
  $1$-centers of members of $C(T;m,k)$.

\end{lemma}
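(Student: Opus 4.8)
The plan is to show that each of the two constructions outputs a forest of $m$ node-disjoint subtrees of order $k$ whose eccentricity $e_m$ attains the minimum, so that by Definition~\ref{CFDefi} both lie in $C(T;m,k)$, and that with consistent tie-breaking the member produced from the $m$-center is recovered from the $1$-centers.

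First I would fix notation. Let $F=\{W_1,\dots,W_m\}$ be the forest returned by running \textsc{BuildST}, \textsc{ExtendST} and \textsc{PruneST} on the seed set $V_m=\{v_1,\dots,v_m\}$; by Remark~\ref{proofST} and Lemmas~\ref{proofextendST} and~\ref{proofpruneST} this $F$ is a central forest, so $e_m(F)$ is the least value of $e_m$ over $\mathbb{F}_{T,m}$. For each $i$ let $c_i$ be the $1$-center of $W_i$ (Jordan's pruning applied to $W_i$ alone). Since the $W_i$ are pairwise node-disjoint, $c_1,\dots,c_m$ are $m$ distinct nodes of $T$ and form a legitimate seed set; let $F'=\{W_1',\dots,W_m'\}$ be the forest obtained by running the same three algorithms on $\{c_1,\dots,c_m\}$. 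By construction $F'$ is again $m$ disjoint subtrees of order $k$, so the content of the lemma reduces to the claim $e_m(F')=e_m(F)$, plus the observation that the choices can be made so that $W_i'=W_i$.

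To prove $e_m(F')\le e_m(F)$ I would first show that the groups $S_1',\dots,S_m'$ into which \textsc{BuildST} partitions $V(T)$ when seeded with $\{c_1,\dots,c_m\}$ are compatible with the subtrees of $F$, i.e. $V(W_i)\subseteq S_i'$. The relevant tree fact is that, for $w\in W_i$ and any $j\neq i$, the median $q$ of $w$, $c_i$, $c_j$ in $T$ lies on the geodesic from $w$ to $c_i$, hence inside the subtree $W_i$; feeding this into the way \textsc{BuildST} assigns nodes, and using that the $W_i$ are the node-disjoint pruned rows of one $ST$ matrix, one argues that every node of $W_i$ is at least as close to $c_i$ as to any $c_j$, so $V(W_i)\subseteq S_i'$. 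Given this, $|S_i'|\ge k$, so \textsc{ExtendST} never fires, and by Lemma~\ref{proofpruneST} \textsc{PruneST} returns a central-$k$-subtree $W_i'$ of $S_i'$ whose eccentricity within $S_i'$ is at most that of $W_i$. Decomposing, for an arbitrary $x\in V(T)$, the distance $d(x,F')$ along the path from $x$ into the group containing the subtree nearest $x$, and comparing with the analogous decomposition of $d(x,F)$, gives $d(x,F')\le d(x,F)$ for every $x$, hence $e_m(F')\le e_m(F)$; since $e_m(F)$ is already minimal, equality holds and $F'\in C(T;m,k)$. Choosing in \textsc{PruneST} the padding sets that reconstruct each $W_i$ as a central-$k$-subtree of $S_i'$ makes $W_i'=W_i$, so the two constructions produce the same central forest.

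The step I expect to be the main obstacle is the compatibility claim $V(W_i)\subseteq S_i'$ — that rebuilding around the $1$-centers does not reassign any node of a subtree to a different $1$-center. This is delicate precisely because of the phenomenon flagged in the Observation preceding Theorem~\ref{contains}: the $c_i$ need not form an $m$-center of $T$, so the $m$-center partition cannot be invoked directly, and the inclusion must be wrung out of the tree metric together with the facts that each $W_i$ already lies inside its $m$-center group and is balanced about its own $1$-center. Should the inclusion fail in some degenerate tree, one would instead have to bound $\max_i e_{S_i'}(W_i')$ against $e_m(F)$ directly, which requires a closer analysis of how the region-local eccentricities produced by \textsc{PruneST} relate to global distances in $T$.
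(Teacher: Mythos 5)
There are two genuine problems with your argument. First, it is circular relative to the role this lemma plays in the paper. You open by asserting that the forest $F$ produced by seeding the algorithms with the $m$-center is already a central forest, ``by Remark~\ref{proofST} and Lemmas~\ref{proofextendST} and~\ref{proofpruneST}.'' Those three statements only assert local properties (nodes assigned to nearest $m$-center node, minimal \emph{increase} of eccentricity when rows are extended, and that \textsc{PruneST} returns a central-$k$-subtree \emph{of each row}); none of them establishes that the resulting forest attains the global minimum of $e_m$ over $\mathbb{F}_{T,m}$. That global claim is exactly what the present lemma is needed for --- it is cited alongside the other three in the proof of Theorem~\ref{proofalgoCF} --- so you cannot take it as a premise. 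Once the premise is removed, your argument only shows a fixed-point property of the algorithm (seeding with the $1$-centers of its own output reproduces that output), not that either construction lands in $C(T;m,k)$ as defined in Definition~\ref{CFDefi}.

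Second, the step you yourself flag as the main obstacle, $V(W_i)\subseteq S_i'$, is not merely delicate but false in general: if $W_i$ is a path-like subtree of order $k$, a node $w$ at its periphery has $d(w,c_i)\approx k/2$, while a neighboring subtree $W_j$ may sit one edge away from $w$ with $c_j$ close to the shared boundary, giving $d(w,c_j)$ as small as $2$; then \textsc{BuildST} seeded with the $c_i$ reassigns $w$ to $S_j'$. So the clean route you propose (no call to \textsc{ExtendST}, then a pointwise comparison $d(x,F')\le d(x,F)$) does not go through, and you would be forced into the fallback you mention, which you do not carry out. For comparison, the paper argues in the opposite direction: it invokes Theorem~\ref{mcenterpart} to place each $m$-center node \emph{inside} some member of a $C(T;m,k)$, builds rows around those seeds, and appeals to \textsc{ExtendST} to repair exactly the misassignments to neighboring rows that defeat your inclusion. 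That proof is itself informal and does not quantify the repair step either, but it at least avoids assuming the optimality of $F$ up front. To salvage your version you would need an independent argument that the algorithm's output is optimal, or a direct bound on $\max_i e_{S_i'}(W_i')$ in terms of $e_m(F)$ that tolerates reassignment of boundary nodes.
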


\begin{proof}

  We need to build $m$-subtrees in such a way that when we get a
  central-$k$-subtree of these $m$-subtrees, the eccentricity of each
  subtree is minimum. If we build our $m$-subtrees around the
  $1$-center of each member of $C(T;m,k)$ and we prune these subtrees
  to get central-$k$-subtrees, we get each member of $C(T;m,k)$. If we
  build subtrees around any other node from each members of
  $C(T;m,k)$, we get almost same subtrees as we get by choosing
  $1$-centers except some of the nodes may be assigned to the
  neighboring subtrees. By Theorem~\ref{mcenterpart}, the $m$-center
  of tree $T$, are part of members of some $C(T;m,k)$. Therefore, we
  know at least one node from each members of $C(T;m,k)$. We build
  $m$-subtrees around the $m$-center of tree $T$. As some of nodes may
  be assigned to the neighboring subtrees, so if we need more nodes in
  any subtree to build the subtree of order $k$, we take nodes from
  neighboring subtrees. Then, We apply pruning on these $m$-subtrees
  and get central-$k$-subtrees as members of $C(T;m,k)$. \qedhere

\end{proof}

\begin{theorem}  \label{proofalgoCF}
\index{proof of correctness!\textsc{CF} algorithm}

The Algorithm~\ref{algoCF} gives either the central forest $C(T;m,k)$
for the tree $T$ with $m$ subtrees of order $k$ or outputs maximum
possible order of a subtree and exits.

\end{theorem}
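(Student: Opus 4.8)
The plan is to argue directly from the control flow of Algorithm~\ref{algoCF}, distinguishing the two possible exits and invoking the correctness lemmas already proved for its subroutines. First I would record that, by Remark~\ref{proofST}, the matrix $ST$ produced at line~1 partitions $V(T)\setminus V_m$ so that every node lies in the row of its nearest $m$-center node; hence each row of $ST$ together with its $m$-center node induces a subtree of $T$ in which every node is at the minimum possible distance from $V_m$. Let $minNodes$ denote the least row size. If $k\le minNodes$, the \textbf{while} loop is never entered and control passes straight to line~14, where \textsc{PruneST} is called; by Lemma~\ref{proofpruneST} this yields $m$ central-$k$-subtrees, one pruned from each row. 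Combining Lemma~\ref{proofmcenter} (building the subtrees on the $m$-center gives the same family as building them on the $1$-centers of the members of $C(T;m,k)$) with Theorem~\ref{mcenterpart} (every $m$-center node lies in some member of $C(T;m,k)$), the collection stored in $A$ is a genuine member of $C(T;m,k)$, so in this case the algorithm returns a central forest, as claimed.

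Next I would treat the case $k>minNodes$. Here the loop repeatedly selects the smallest row $l$ (line~6), snapshots $ST$ as $OldST$, and calls \textsc{ExtendST}. By Lemma~\ref{proofextendST}, each successful call rebalances nodes among neighbouring rows so that $ST[l]$ attains size at least $k$ while the increase in eccentricity relative to $V_m$ is kept as small as possible, and it returns the number $a$ of nodes added to row~$l$. If $a\neq 0$, the invariant ``each row still induces a subtree with its nodes as close to $V_m$ as the size constraint permits'' is preserved, $minNodes$ is recomputed, and since the chosen deficient row strictly gains at least one node on each pass while the total node count is fixed, the loop terminates after finitely many iterations with $minNodes\ge k$; then line~14 again delivers a central forest exactly as in the first case. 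If instead some call returns $a=0$, the current minimum-size row cannot be enlarged to $k$ even after exhausting every chain of neighbour-to-neighbour transfers, and I would argue that no arrangement of $m$ node-disjoint subtrees of $T$ can force every part to have at least $k$ nodes; hence $minNodes$ is precisely the largest order for which $C(T;m,k)$ exists, and the algorithm outputs it and exits at lines~10--12, matching the statement.

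The main obstacle will be the last claim in the second case: showing that when \textsc{ExtendST} fails, $minNodes$ is a genuine upper bound on the feasible subtree order and not merely the value reached by this particular routing of spare nodes. This requires a short combinatorial argument that the transfer process captures \emph{every} admissible redistribution of nodes among node-disjoint subtrees anchored at the $m$-center---essentially that any feasible size-$k$ partition could be realised by a sequence of the same neighbour-to-neighbour moves---together with the observation (cf.\ Remark~\ref{adjm}) that fixing the anchors at the $m$-center loses no generality, by Lemma~\ref{proofmcenter}. I would also note, to close the argument cleanly, that the \textbf{while} guard $k>minNodes$ combined with the strict growth of the selected row guarantees termination in every run.
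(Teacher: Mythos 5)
Your proposal takes essentially the same route as the paper: split on the two exits of Algorithm~\ref{algoCF}, invoke Remark~\ref{proofST}, Lemma~\ref{proofextendST}, Lemma~\ref{proofpruneST} and Lemma~\ref{proofmcenter} for the subroutines, and handle termination separately. In fact your write-up is considerably more detailed than the paper's, which compresses the entire argument into three sentences. The one substantive point worth flagging: the obstacle you identify at the end --- that when \textsc{ExtendST} returns $0$ one must still show $minNodes$ is a genuine upper bound over \emph{all} forests of $m$ node-disjoint subtrees, not merely over those reachable by the particular neighbour-to-neighbour transfer process anchored at the given $m$-center --- is a real gap, and the paper's own proof does not close it either; it simply asserts that the output is ``the maximum possible value of $k$.'' So you have not missed an idea that the paper supplies; you have correctly located the weakest link in an argument the paper leaves at the level of assertion.
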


\begin{proof}

  The algorithm eventually terminates. Using Remark~\ref{proofST},
  Lemma~\ref{proofextendST}, Lemma~\ref{proofpruneST} and
  Lemma~\ref{proofmcenter}, it can be easily shown that
  Algorithm~\ref{pruneST} outputs the forest with minimum
  eccentricity.  Also, if Algorithm~\ref{extendST} returns $0$, the
  Algorithm~\ref{algoCF} terminates and outputs the maximum possible
  value of $k$ for which central forest is possible. \qedhere

\end{proof}

\section{Conclusions and Further Work}   \label{conc}

In this paper we have introduced a new central structure in trees,
which we call central forests in trees. $C(T;m,k)$ is a central forest
of $m$ subtrees, each of order $k$, for tree $T$, which has minimum
eccentricity among all the possible forests of this order for tree
$T$. We have given an algorithm for constructing the central forests
in trees. This algorithm is efficient as it computes the central
forest in $O(n(m+k))$ time, where $n$ is the number of nodes in the
tree $T$. The complete analysis and proof of correctness for the
algorithm are also given in this paper. Our algorithm also computes a
upper bound on the value of $k$ for which the central forest of $m$
subtrees is possible.

This work suggests the following possible extensions.
\begin{itemize}
\item[(1)] Further Generalization
\begin{itemize}
\item A further \index{generalization} generalization is possible, if
  we allow that not all the subtrees in a central forest may be of
  like order.

Then if $\mathbb{F}_{T,m}$ is the set of all forests in $T$ of $m$
subtrees each, and \(\vec{\mathbf{a}} = (a_0, a_1, \ldots, a_{m-1})\)
is a vector giving the orders of the $m$ subtrees with \(a_i \neq 0\),
then the eccentricity of a single forest is as given before, and a
central forest \(C(T; m, \vec{\mathbf{a}})\) is given by:
\[ C(T; m, \vec{\mathbf{a}}) \ \triangleq \ \{ F \ | \ e_m(F) \leq e_m(F'), \ \forall F' \in \mathbb{F}_{T,m} \}. \]





\item An interesting problem can be the study of forests $\Psi$ of
  subtrees (of variable orders and number) of a tree $T$ when the
  maximal allowable eccentricity $\delta$ is specified.  One can give
  an algorithm that takes maximum allowable eccentricity and outputs
  the value of $m$ and $k$ for a $C(T;m,k)$. The problem is not well
  understood yet, it may be possible that no such algorithm exists and
  the problem posed is NP-hard.

\item Another potential area is to explore a similar extension to the
  concept of centroids, by defining a centroidal forest. Study the
  properties of centroidal forest and give a algorithm to construct
  such a forest in trees.

\item Another possible future work can be centrally use not more than
  $\sigma$ nodes and create not more than $\rho$ subtrees. That is, in
  central forest, there is a limit $\sigma$ on number of vertices that
  can be used in central forest. Also the number of subtrees ($m$)
  cannot be more than $\rho$ for a central forest. Under these
  restrictions, what is the minimum eccentricity possible for a
  central forest in a given tree?

\end{itemize}
\end{itemize}
\begin{itemize}
\item[(2)] Further Work on the CF Algorithm
\begin{itemize}

\item The algorithm we have given for constructing central forest is a
  centralized algorithm. A distributed algorithm, where nodes can
  decide whether they are part of a $C(T;m,k)$ or not, can be created.

\item The upper bound on the value of $k$ is given by the algorithm,
  we do not have a way to express the upper bound in terms of the
  degree of the tree, eccentricity, etc. Our upper bound depends on
  the algorithm output. A more generalized expression for upper bound
  on the value of $k$ for $m$ subtrees may be found.

\item We are assuming that an $m$-center of the tree is given and
  building our algorithm using this. One can think of an algorithm
  that does not require an $m$-center, or calculates this by itself.

\end{itemize}
\end{itemize}

\bibliography{forestsbib}

\end{document}